\newtheorem{theorem}{Theorem}[section]
\newtheorem{corollary}[theorem]{Corollary}
\newtheorem{lemma}[theorem]{Lemma}
\newtheorem{definition}[theorem]{Definition}
\newtheorem{example}[theorem]{Example}
\newtheorem{remark}[theorem]{Remark}
\begin{document}

\title[Perturbative YBO]{Perturbative Expansion of Yang-Baxter Operators}

\author{Emanuele Zappala} 
\address{Department of Mathematics and Statistics, Idaho State University\\
	Physical Science Complex |  921 S. 8th Ave., Stop 8085 | Pocatello, ID 83209} 
\email{emanuelezappala@isu.edu}

\maketitle

\begin{abstract}
		We study the deformations of a wide class of Yang-Baxter (YB) operators arising from Lie algebras. We relate the higher order deformations of YB operators to Lie algebra deformations. We show that the obstruction to integrating deformations of self-distributive (SD) objects lie in the corresponding Lie algebra third cohomology group, and interpret this result in terms of YB deformations. We show that there are YB operators that admit integrable deformations (i.e. that can be deformed infinitely many times), and that therefore give rise to a full perturbative series in the deformation parameter $\hbar$. We consider the second cohomology group of YB operators corresponding to certain types of Lie algebras, and show that this can be nontrivial even if the Lie algebra is rigid, providing examples of nontrivial YB deformations that do not arise from SD deformations.
\end{abstract}

\date{\empty}

\tableofcontents

\section{Introduction}

The study of Yang-Baxter (YB) operators has been initiated in statistical mechanics \cite{Jim}, where they were used for the conservation of momentum in scattering processes. After their introduction in statistical mechanics, YB operators have found deep applications in geometric topology \cite{Tur,Oht}, where the YB equation is the algebraic counterpart of the combinatorial Reidemeister move III, while the invertibility of YB operators takes the diagrammatic form of Reidemeister move II. This important relation between combinatorial moves that characterize the isotopy classes of links, and the (algebraic) operatorial YB equation paved the way to the construction of quantum invariants of links, as traces of operators derived from YB operators.

 Furthermore, YB operators play a fundamental role in the theory of Ribbon Hopf algebras \cite{CP_quantum}. The latter are used (with certain additional assumptions) to construct invariants of $3$-manifolds, and as such they are widely employed in theoretical physics -- e.g. in Chern-Simons theory \cite{Hen}, and quantum computation -- e.g. in the Freedman-Kitaev-Wang model \cite{FKW}. 
 
 Homology theories of YB operators (in the set-theoretic context) were introduced by Carter, Elhamdadi and Saito in \cite{CCES1}. The inspiring construction for the theory was a procedure of generalizing the construction of knot invariants from quandles and racks, which are known algebraic objects that naturally produce (set-theoretic) YB operators. Since their introduction, such theories have been studied by various authors -- e.g. \cite{PW,LV,ESZ}. Moreover, related cohomology theories of YB operators have been considered since the introduction of YB homology -- see \cite{Eis1,Eis2,YB_deform}.
 
 Self-distributive (SD) objects in tensor categories are a generalization of quandles and racks to the setting of general tensor categories, instead of the category of sets -- see \cite{3Lie_AZ,CCES,Leb}. SD objects give rise to YB operators in tensor categories, and can therefore be used to define quantum invariants of links when a suitable notion of trace is present in the category \cite{EZ}. As such, one is interested in studying the YB operators that arise from SD structures in tensor categories and, of course, modules and vector spaces are a very natural starting point. Toward the objective of constructing quantum invariants of links, one is interested in modifying the YB operators through some cohomological procedure, mostly in view of the success of the cohomological invariants of \cite{CJKLS}.

In order to modify YB operators associated to SD structures for quantum invariants, one can proceed in two very natural ways. Firstly, one can use the cohomology of SD structures with coefficients in the unit object of the category, which is a generalization of the standard cohomology theory of racks and quandles in \cite{CJKLS}. The second approch, is computationally more involved (in terms of cohomology), but it has the convenience of being general for YB operators, without the need of these arising from SD structures. Namely, this is the approach of deforming a YB operator through nontrivial classes of its second cohomology group, and use this to obtain quantum link invariants. This can be performed through a cohomological infinitesimal deformation in the style of Gerstenhaber \cite{Gerst}. 

The first approach has been used in \cite{Grana,EZ}. In \cite{EZ} it was shown to have nontrivial applications for Hopf algebras and, more generally, Hopf monoids in tensor categories. However, another well known class of SD structures arises from ($n$-ary) Lie algebras \cite{CCES,3Lie_AZ}. These structures were not considered in \cite{EZ}, and it is not clear whether nontrivial quantum invariants arise from them using cohomology with coefficients in the ground ring of the Lie algebra. This has prompted the study of SD deformations of SD structures arising from Lie algebras in \cite{YB_deform}. Under some mild assumptions it was shown that Lie second cohomology and SD second cohomology are isomorphic, and that the former always injects into the latter. It was also shown that such deformations naturally induce YB deformations of the associated YB operator. Hower, YB operator deformations might in principle arise even if they are not induced by Lie algebra deformations. 

The scope of this article is two-fold. First, we study higher (i.e. not infinitesimal) deformations of YB operators associated to Lie algebras. As a result we obtain YB operators that are perturbed with respect to the deformation parameter $\hbar$. Such construction would produce quantum invariants in the form of polynomials or power series in $\hbar$, and this will be studied in the future. 
Second, we study YB deformations that do not arise from Lie deformations, and that are in a sense purely YB deformations. We find that even simple Lie algebras, whose rigidity has been long  known, give rise to YB operators that can be nontrivially deformed. 

This article is structured as follows. In Section~\ref{sec:pre} we recall some definitions that are used throughout the article, and give the references for the needed background material. In Section~\ref{sec:higher_quandle} we consider higher deformations of SD structures in relation to Lie algebra deformations. In Section~\ref{sec:higher_YB} we consider higher deformations of YB operators and study their relation to Lie algebra higher deformations. In Section~\ref{sec:coh} we give a characterization of the second cohomology group for YB operators arising from a class of Lie algebras. In Section~\ref{sec:ex} we give some examples of YB operators that can be deformed infinitely many times, and therefore give rise to a full perturbative series in the deformation parameter $\hbar$, and show that rigid Lie algebras can give rise to nontrivially deformable YB operators.

\section{Preliminaries}\label{sec:pre}
 
We collect here some notions that will be used throughout the article.

\subsection{Racks and quandles}

First recall that a {\it rack} is a magma $X$ with a binary operation $*$ satisfying the two conditions:
\begin{itemize}
	\item 	
			Right multiplication $R_y : X\longrightarrow X$ is invertible for all $y\in X$, where $R_y(x) := x*y$. 
	\item 
			$(x*y)*z = (x*z)*(y*z)$, called self-distributivity.
\end{itemize}
These properties are algebraic versions of the Redemeister moves II and III, respectively. If, in addition, a rack satisfies the idempotency relation, $x*x = x$, then it is called a {\it quandle}. See \cite{CJKLS}.
	Let $X$ be a rack. Define chain groups $C_n(X)$ to be the free abelian group generated by the elements of $X^n$ for each $n$. Then, in \cite{CJKLS} the $n^{\rm th}$-differential $\partial_n$ was defined on generators according to the assignment
\begin{eqnarray*}
	\lefteqn{\partial_n (x_1,\ldots , x_n)}\\
	&=& \sum_{i=2}^n (-1)^n[(x_1,\ldots ,x_{i-1},\hat x_i, x_{i+1}, \dots , x_n)\\
	&& \hspace{0.5cm} - (x_1*x_i, \ldots, x_{i-1}*x_i, \hat x_i,x_{i+1}, \ldots , x_n)].
\end{eqnarray*}
These differentials define a homology theory for quandles, and its dualization gives a cohomology which has been extensively used in geometric topology to construct state-sum invariants of links and knotted surfaces \cite{CJKLS}.

The categorical counterpart of racks and quandles was introduced in \cite{CCES}. They are self-dsitributive (SD) objects in tensor categories, defined through a relatively straightforward generalization of the set-theoretic definitions above. For a general perspective on categorical SD structures with $n$-ary operators see Section~7 in \cite{EZ}. The cohomological theory that classifies the infinitesimal deformations of SD structures was introduced and studied in \cite{YB_deform}. We very briefly recall some of the definitions from \cite{EZ} and \cite{YB_deform}. 

Let $\mathcal C$ be a symmetric monoidal category and let $X$ be an object in $\mathcal C$. Then, we say that a coalgebra object $(X,\Delta)$ is an SD object if there exists a morphism (of coalgebras) $q: X\otimes X \longrightarrow X$ that makes the following diagram commute:
\begin{center}
	\begin{tikzcd}
		& X^{\otimes 4}\arrow[dl,"\shuffle",swap] &  & X^{\otimes 3}\arrow[dr,"q\otimes \mathbb 1"] \arrow[ll,"\mathbb 1^{\otimes 2}\otimes \Delta",swap]&\\
		X^{\otimes 4}\arrow[d,"q\otimes q",swap]& & & & X^{\otimes 2}\arrow[d,"q"]\\
		X^{\otimes 2}\arrow[rrrr,"q",swap]& & & &X,
	\end{tikzcd} 
\end{center}
where $\shuffle = \mathbb 1\otimes \tau \otimes \mathbb 1$, having indicated by $\tau$ the switching morphism of  $\mathcal C$, $\tau_{X,X} : X\otimes X \longrightarrow X\otimes X$. If $\mathcal C$ is the category of sets with $\otimes  = \times$, and $X$ is a rack with $\Delta(x) = x\times x$, then one recovers the usual definition of rack given above. More general examples arise from Hopf monoids in symmetric monoidal categories, see \cite{EZ}. In the category of modules, an SD object is a coalgebra $(X,\Delta)$ endowed with a coalgebra homomorphism $q: X\otimes X \longrightarrow X$ satisfying the condition
\begin{eqnarray*}
		q(q(x\otimes y)\otimes z) = q(q(x\otimes z^{(1)})\otimes q(y\otimes z^{(2)})),
\end{eqnarray*}
where we have used Sweedler's notation to indicate the coproduct $\Delta(z) = z^{(1)} \otimes z^{(2)}$. We say that $q$ is invertible if there exists $\tilde q$, that turns $(X,\Delta)$ into an SD object as well, such that $\tilde q(q\otimes \mathbb 1)(\mathbb 1\otimes \Delta) = q(\tilde q\otimes \mathbb 1)(\mathbb 1\otimes \Delta) = \mathbb 1 \otimes \epsilon$, where $\epsilon$ is the counit morphism of the coalgebra object $X$. In this situation we say that $X$ is a rack object. 
 The SD object arising from Lie algebras considered in this article are all invertible. 
 
There is a similar notion of $n$-ary SD object and rack object as well. The definitions are obtained by straightforward generalization of the notions given above for the binary case. Details can be found in \cite{EZ,YB_deform}. 

For an SD object $(X,\Delta)$ in the category of $\mathbb k$-modules, we can define the SD cohomology group (with coefficients in $X$) that classifies SD deformations of $X$ following \cite{YB_deform}. We set $C^1_{\rm SD}(X,X)$ to be the module of coderivations of $(X,\Delta)$. Next, define the second cochain group $C^2_{\rm SD}(X,X)$ to be the module of linear maps $\psi: X\otimes X \longrightarrow X$ satisfying the property $\Delta \psi = (\psi\otimes q + q\otimes \psi) (\mathbb 1\otimes \tau \otimes \mathbb 1)\Delta \otimes \Delta$. We set $C^3_{\rm SD}(X,X) = {\rm Hom}_{\mathbb k}(X^{\otimes 3},X)$, consisting of module homomorphisms.  The first differential $\delta^1$ is defined as
\begin{eqnarray*}
		\delta^1 f(x\otimes y) = f(q(x\otimes y)) - q(f(x)\otimes y) - q(x\otimes q(y)),
\end{eqnarray*}
while the second differential is defined as
\begin{eqnarray*}
	\delta^2 \psi (x\otimes y \otimes z) &=& q(\psi(x\otimes y)\otimes z) + \psi(q(x\otimes y) \otimes z) - \psi(q(x\otimes z^{(1)})\otimes q(y\otimes z^{(2)})\\
	&& - q(\psi(x\otimes z^{(1)})\otimes q(y\otimes z^{(2)}) - q(q(x\otimes z^{(1)})\otimes \psi(y\otimes z^{(2)}).
\end{eqnarray*}

We recall that given a Lie algebra $\mathfrak g$ over the ring $\mathbb k$, we obtain a rack object as follows \cite{CCES}. Set $X = \mathbb k \oplus \mathfrak g$ and indicate elements of $X$ as pairs $(a,x)$. The coalgebra structure on $X$ is given by $\Delta (a,x) = (a,x) \otimes (1,0) + (1,0) \otimes (0,x)$. The coalgebra counit is defined es $\epsilon (a,x) = a$. Define $q((a,x)\otimes (b,y)) = (ab, bx + [x,y])$. It can be shown that $(X,\Delta)$ is a rack object with this choice of $q$. When $\mathfrak g$ is an $n$-Lie algebra, a similar construction, mutatis mutandis gives an $n$-rack object. 

On $X = \mathbb k \oplus \mathfrak g$ we define the projections $\pi_0: X \longrightarrow \mathbb k$ and $\pi_1: X \longrightarrow \mathfrak g$, as $\pi_0(a,x) = a$ and $\pi_1(a,x) = x$. We will also often use the tensor products $\pi_i\otimes \pi_j$.

\subsection{Yang-Baxter operators}

Let $V$ be a vector space or module, and let $R: V\otimes V \longrightarrow V\otimes V$ be a linear map. If $R$ satisfies the operatorial equation
\begin{eqnarray}\label{eqn:YBE}
		(R\otimes \mathbb 1)(\mathbb 1\otimes R)(R\otimes \mathbb 1) = (\mathbb 1\otimes R)(R\otimes \mathbb 1)(\mathbb1 \otimes R),
\end{eqnarray}
then we say that $R$ is a pre-Yang-Baxter operator. An invertible pre-Yang-Baxter operator is called Yang-Baxter (YB) operator. The full cochain complex for YB cohomology can be found in \cite{Eis1}. We will, however, mostly use the low dimensional differentials, since we are mostly concerned with the second YB cohomology group, and recall them here:  
\begin{eqnarray*}
	\delta^1_{\rm YB} (f) 
	&=&
	R ( f \otimes {\mathbb 1}) + R ( {\mathbb 1} \otimes  f ) 
	-  ( f \otimes {\mathbb 1}) R - ( {\mathbb 1} \otimes  f )  R ,\\
	\delta^2_{\rm YB} (\phi) &=&
	(R \otimes {\mathbb 1} ) ( {\mathbb 1}  \otimes R ) ( \phi \otimes  {\mathbb 1} )
	+ (R \otimes {\mathbb 1} ) ( {\mathbb 1}  \otimes \phi ) ( R \otimes  {\mathbb 1} )
	+  (\phi \otimes {\mathbb 1} ) ( {\mathbb 1}  \otimes R ) ( R \otimes  {\mathbb 1} ) \\
	& & -  ( {\mathbb 1}  \otimes R ) ( R \otimes  {\mathbb 1} ) ( {\mathbb 1}  \otimes \phi ) 
	- ( {\mathbb 1}  \otimes R ) ( \phi \otimes  {\mathbb 1} ) ( {\mathbb 1}  \otimes  R ) 
	- ( {\mathbb 1}  \otimes \phi ) ( R \otimes  {\mathbb 1} ) ( {\mathbb 1}  \otimes  R ),
\end{eqnarray*}
where $f : V \longrightarrow V$ is a YB $1$-cochain, and $\phi: V\otimes V \longrightarrow V$ is a YB $2$-cochain.

We recall that given a rack object $X$, we obtain a YB operator by setting
\begin{eqnarray*}
		R(x\otimes y) = y^{(1)} \otimes q(x\otimes y^{(2)}).
\end{eqnarray*}
This procedure generalizes the standard set-theroetic YB operator arising from racks. Given an $n$-rack, we can similarly obtain a YB operator on $X^{\otimes (n-1)}$ by setting
\begin{eqnarray*}
	R(x_1\otimes x_2 \otimes \cdots x_n) = x_2^{(1)}\otimes \cdots x_n^{(1)} \otimes q(x_1\otimes x_2^{(2)}\otimes \cdots x_n^{(2)}).
\end{eqnarray*} 

 This correspondence has been used in \cite{YB_deform} to show that from an $n$-Lie algebra, and its corresponding $n$-rack object $X$, one can derive a monomorphism between the second rack cohomology group into the YB second cohomology group. In other words, infinitesimal deformations of $n$-Lie algebras (or infinitesimal deformations of the corresponding $n$-rack objects) give rise to infinitesimal deformations of the corresponding YB operators. Moreover, if the Lie deformation is nontrivial, the corresponding YB deformation is nontrivial as well.

\section{Higher deformations of rack objects}\label{sec:higher_quandle}

Let us consider an $n$-ary rack object $(X,T,\Delta)$ induced by an $n$-ary Lie algebra $\frak g$ as in \cite{3Lie_AZ,YB_deform}. In this case, in \cite{YB_deform} it was shown that there is an injective map from the Lie algebra second cohomology group of $\frak g$ to the rack second cohomology group of $X$. Moreover, under some mild assumptions this homomorphism is also an isomorphism. As the second rack cohomology group $H^2_{\rm SD}(X,X)$ characterizes the infinitesimal deformations of $X$, we have an infinitesimal deformation of $X$ corresponding to each $n$-Lie algebra $2$-cocycle.  We want to consider the obstructions to extending this infinitesimal deformation to higher orders. 

Throughout this article we leave the comultiplication undeformed. In \cite{YB_deform} it was shown that SD deformations where the comultiplication is deformed nontrivially exist. However, we leave this more general case to a subsequent study.

We recall that, following definitions of \cite{YB_deform},  a $2$-cochain $\psi: X\otimes X\longrightarrow X$ is called special if for all $(a,x)$ and $(b,y)$ in $X$ it holds $\psi((a,x)\otimes (b,y)) = (0,\phi(x\otimes y))$ for some $\phi : \mathfrak g\otimes \mathfrak g \longrightarrow \mathfrak g$.

For a Lie $k$-cochain $\phi$, define the following SD $k$-cochain, $\Theta^k(\phi)((a_1,x_1)\otimes \cdots \otimes (a_k,x_k)) = (0,\phi(x_1\otimes \cdots \otimes x_k))$. This correspondence defines a map $C^k_{\rm Lie}(\mathfrak g,\mathfrak g) \longrightarrow C^k_{\rm SD}(X,X)$, from the $k^{\rm th}$ Lie cochain group  to the $k^{\rm th}$ SD cochain group.

\begin{theorem}\label{thm:quandle_deform}
	Let $\frak g$ be an $n$-Lie algebra and let $(X,T,\Delta)$ denote its corresponding $n$-rack object. Assume that $\phi = \sum_{i=0}^m \hbar^i\phi_i$ is an order $m$ deformation of the bracket of $\frak g$. Then,  the correspondence $\Theta^{m+1}$ gives an order $m+1$ deformation of $X$ if the obstruction to extending $\phi$ to an order $m+1$ Lie deformation vanishes.  Moreover, if $\frak g$ has trivial center, then the obstructions to integrating any infinitesimal deformation to a degree $m$ deformation lie in $H^3_{\rm Lie}(\frak g, \frak g)$, for all $m$. 
\end{theorem}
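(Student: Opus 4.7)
The plan is to lift the classical Gerstenhaber-style obstruction theory from the Lie side to the SD side via the map $\Theta$. For an order $m$ SD deformation $q_m = q + \sum_{i=1}^m \hbar^i \psi_i$, substituting into the self-distributivity axiom and extracting the coefficient of $\hbar^{m+1}$ produces an obstruction $3$-cochain $\mathrm{Obs}^{m+1}_{\rm SD}(\psi_1,\ldots,\psi_m) \in C^3_{\rm SD}(X,X)$ built from compositions indexed by $i+j = m+1$; this cochain is automatically a cocycle, and the extension to order $m+1$ exists precisely when its class vanishes in $H^3_{\rm SD}(X,X)$, in which case $\psi_{m+1}$ can be taken as any solution of $\delta^2_{\rm SD}\psi_{m+1} = -\mathrm{Obs}^{m+1}_{\rm SD}$. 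The same analysis on the Lie side produces the Lie obstruction $\mathrm{Obs}^{m+1}_{\rm Lie}(\phi_1,\ldots,\phi_m) \in C^3_{\rm Lie}(\mathfrak g,\mathfrak g)$.

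The central technical ingredient is the intertwining relation
\[
\Theta^3\bigl(\mathrm{Obs}^{m+1}_{\rm Lie}(\phi_1,\ldots,\phi_m)\bigr) = \mathrm{Obs}^{m+1}_{\rm SD}\bigl(\Theta^2(\phi_1),\ldots,\Theta^2(\phi_m)\bigr),
\]
together with the low-degree chain-map property $\delta^2_{\rm SD}\Theta^2 = \Theta^3\delta^2_{\rm Lie}$. Both reduce to direct, if tedious, computations from the explicit formulas $q((a,x)\otimes (b,y)) = (ab,\, bx + [x,y])$ and $\Theta^k(\phi)((a_1,x_1)\otimes \cdots \otimes (a_k,x_k)) = (0,\phi(x_1\otimes \cdots \otimes x_k))$: because the special cochains $\Theta^2(\phi_i)$ vanish on the scalar slot, only the Lie-bracket component contributes in every composition, and the SD obstruction collapses termwise onto the Lie obstruction. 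Granted this intertwining, the first claim of the theorem is immediate: if the Lie obstruction is exact, so that $\mathrm{Obs}^{m+1}_{\rm Lie} = -\delta^2_{\rm Lie}\phi_{m+1}$, then $\psi_{m+1} := \Theta^2(\phi_{m+1})$ satisfies $\delta^2_{\rm SD}\psi_{m+1} = -\mathrm{Obs}^{m+1}_{\rm SD}$, giving the required extension of the SD deformation.

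For the trivial-center statement I would argue inductively, showing that the entire truncated deformation can be kept within the image of $\Theta^2$. The base case is that under the trivial-center hypothesis the isomorphism from \cite{YB_deform} between $H^2_{\rm Lie}(\mathfrak g,\mathfrak g)$ and the special part of $H^2_{\rm SD}(X,X)$ exhausts the latter, so any infinitesimal SD $2$-cocycle is cohomologous to some $\Theta^2(\phi_1)$. Assuming $\psi_1,\ldots,\psi_m$ have been arranged to be special, the intertwining above forces $\mathrm{Obs}^{m+1}_{\rm SD}$ to be special and its class to equal the image under $\Theta^3$ of $[\mathrm{Obs}^{m+1}_{\rm Lie}] \in H^3_{\rm Lie}(\mathfrak g,\mathfrak g)$, so that the latter is the genuine obstruction to extension. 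The main hurdle, and the point where the trivial-center hypothesis really earns its keep, is showing that the non-special part of any SD $2$-cocycle arising in the recursion is $\delta^1_{\rm SD}$-exact, so that it can be eliminated by a coboundary adjustment without disturbing the lower-order terms; I expect this to follow from an analysis of coderivations on $X = \mathbb k \oplus \mathfrak g$ combined with the observation that trivial center prevents the appearance of stable non-special cocycles, thereby reducing the whole obstruction theory to $H^3_{\rm Lie}(\mathfrak g,\mathfrak g)$.
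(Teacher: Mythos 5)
Your treatment of the first claim is essentially the paper's argument: the paper likewise expands the self-distributivity identity at degree $m+1$, observes that every term carrying a scalar tensorand cancels between the two sides, and identifies the remainder with $\delta^2_{\rm Lie}(\phi_{m+1}) + \frac{1}{2}[\phi_{\geq 1},\phi_{\geq 1}]_{\rm NR}$ --- which is your intertwining relation written out on simple tensors, and holds precisely because the special cochains $\Theta(\phi_i)$ kill the $\mathbb k$-slot. One caution: you assert that the SD obstruction cochain is ``automatically a cocycle'' and that the extension exists ``precisely when'' its class vanishes in $H^3_{\rm SD}(X,X)$. Neither assertion is needed for the statement (which only claims sufficiency, and locates the obstruction in $H^3_{\rm Lie}(\frak g,\frak g)$, not in $H^3_{\rm SD}$), and the paper establishes neither --- it never even writes down $\delta^3_{\rm SD}$ --- so you should not lean on them.

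The genuine gap is in the trivial-center half. You reduce everything to the claim that the non-special part of any SD $2$-cocycle arising in the recursion is $\delta^1_{\rm SD}$-exact, to be removed by a coboundary adjustment ``without disturbing the lower-order terms,'' and you explicitly defer the proof of that claim. That is the crux of the second assertion; moreover, a gauge-adjustment route would additionally require you to track how replacing $\psi_{m+1}$ by $\psi_{m+1}+\delta^1_{\rm SD}f$ propagates into the order-$(m+2)$ obstruction, which you do not address. The paper avoids all of this by proving that $\psi_{m+1}$ is \emph{literally} special, not merely cohomologous to a special cochain. The mechanism: evaluate the degree-$(m+1)$ part of the SD identity on $X_0 = \mathbb k\otimes\mathbb k \oplus \mathbb k\otimes\frak g \oplus \frak g\otimes\mathbb k$. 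By the inductive hypothesis the $\psi_i$ with $i\le m$ are special, so the quadratic remainder $\Omega_{m+1}$ (your obstruction minus its $\delta^2_{\rm SD}\psi_{m+1}$ part) vanishes on $X_0$; hence on $X_0$ the identity collapses to the $2$-cocycle condition for $\psi_{m+1}$ alone, and the proof of Lemma~6.3 of \cite{YB_deform} applies verbatim under the trivial-center hypothesis to force $\psi_{m+1}$ into the image of $\Theta$ on the nose. You should replace your ``expected'' step with this restriction-to-$X_0$ argument; without it (or a completed version of your coboundary argument), the second half of the theorem is not proved.
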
 
\begin{proof}
	We proceed by induction on the order $m$ of the deformation. For the base of induction, i.e. $m=1$, we use Theorem~5.1 and Proposition~6.6 in \cite{YB_deform}. In fact, let $\phi = \phi_0 + \hbar \phi_1$, where $\phi_0 = [\bullet, \bullet]$ is the bracket of $\frak g$. Then, it is known that $\phi$ is a deformation if and only if $\phi_1$ is a Lie algebra $2$-cocycle of $\frak g$. Let us define the map $\psi = \psi_0 + \hbar \psi_1$ where $\psi_0 := T$ is the $n$-rack operation corresponding to $\phi_0$, and $\psi_1((a_1,x_1)\otimes \cdots \otimes (a_n,x_n)) = (0, \phi_1(x_1\otimes \cdots \otimes x_n))$. Using Proposition~6.6 in \cite{YB_deform} it follows that $\psi_1$ is an $n$-rack $2$-cocycle and, using Theorem~5.1 in \cite{YB_deform}, $\psi$ is an infinitesimal deformation of $T$. Now, assume that $\phi$ can be deformed through a map $\phi_2$, and let us consider a deformation of order $2$ of the $n$-rack structure $\hat \psi = \psi + \hbar^2\psi_2$, where $\psi_2 := \Theta^3(\phi_2)$. Then, imposing the $n$-ary self-distributive condition on $\hat \psi$, we only need to verify that terms of degree $2$ hold, since terms in degree higher than $2$ in $\hbar$ vanish modulo $\hbar^3$, and terms of lower degree are already known to be satisfied by hypothesis. Moreover, from the definition of $\Theta^3$, we have that $\psi_2(X^{\otimes n}) \subset \frak g^{\otimes n}$, and also $\psi_2$ maps vectors containing a tensorand in a copy of the ground ring $\mathbb k$ to zero. These facts simplify the computation very much. For notational simplicity, we write the equations in the case of ternary Lie algebras and $3$-rack. This is generalized immediately to the $n$-ary case.
	For the LHS of $n$-ary self-distributivity evaluated on simple tensors we have
	\begin{eqnarray*}
	\lefteqn{	
	\hat \psi(\hat \psi ((a_1,x_1)\otimes (a_2,x_2) \otimes (a_3,x_3))\otimes (a_3,x_3)\otimes (a_4,x_4) \otimes (a_5,x_5))}\\
	 &=& \hbar^0 [\cdots] + \hbar^1[\cdots] + \hbar^2[\phi_2(\phi_0(x_1\otimes x_2 \otimes x_3)\otimes x_4 \otimes x_5) \\
	 &&+ \phi_1(\phi_1(x_1\otimes x_2 \otimes x_3)\otimes x_4 \otimes x_5)+ \phi_0(\phi_2(x_1\otimes x_2 \otimes x_3)\otimes x_4 \otimes x_5)] + \hbar^3[\cdots].
	\end{eqnarray*}
	For the RHS of the equation we have, setting for simplicity $y_i :=  x_4^{(i)} \otimes x_5^{(i)}$ (observe the superscript due to comultiplication in Sweedler notation)
	\begin{eqnarray*}
	\lefteqn{\hat \psi(\hat \psi(x_1\otimes y_1)\otimes \hat \psi(x_2\otimes y_2) \otimes \hat \psi(x_3\otimes y_3))}\\
	 &=& \hbar^0[\cdots] + \hbar^1[\cdots] + \hbar^2[\phi_2(\phi_0\otimes \phi_0\otimes \phi_0)
	+ \phi_0(\phi_2\otimes \phi_0\otimes \phi_0) + \phi_0(\phi_0\otimes \phi_2\otimes \phi_0) \\
	&&+ \phi_0(\phi_0\otimes \phi_0\otimes \phi_2) + \phi_1(\phi_1\otimes \phi_0\otimes \phi_0) + \phi_1(\phi_0\otimes \phi_1\otimes \phi_0) + \phi_1(\phi_0\otimes \phi_0\otimes \phi_1) + \\
	&&+ \phi_0(\phi_1\otimes \phi_1\otimes \phi_0) + \phi_0(\phi_1\otimes \phi_0\otimes \phi_1) + \phi_0(\phi_0\otimes \phi_1\otimes \phi_1)](x_1\otimes y_1\otimes x_2\otimes y_2\otimes x_3\otimes y_3) \\
	&& + \hbar^3[\cdots].
	\end{eqnarray*}
	Equating, we see that in order for $\hat \psi$ to satisfy the self-distributive property, the following equation
	\begin{eqnarray}\label{eqn:def_deg_2}
	\delta^2_{\rm Lie}(\psi_2) + \frac{1}{2}[\psi_1,\psi_1]_{\rm NR} = 0,
	\end{eqnarray}
	needs to hold, where $[\bullet, \bullet]_{\rm NR}$ is the ternary ($n$-ary in general) version of the Lie bracket defined by Nijenhuis and Richardson in \cite{NR} in the binary case. Therefore, this is the same obstruction to extend $\phi_0 + \hbar \phi_1$ to a deformation of degree $2$, and the obstruction lies in $H_{\rm Lie}^3(\frak g, \frak g)$ generalizing the binary case shown in \cite{NR}. To complete the proof of the base of induction, we need to show that this is always the case when $\frak g$ has trivial center. Observe that, from Lemma~6.3 in \cite{YB_deform}, whenever $\frak g$ has trivial center, the infinitesimal deformation $\psi_1$ of the SD structure is special, and it is characterized by a map $\phi_1: \frak g\otimes \frak g\longrightarrow \frak g$. This in practice means that every infinitesimal deformation of $X$ arises from a deformation of $\frak g$ through $\Theta^2$ (see Theorem~6.4 in \cite{YB_deform}). Now, we want to show that an order $2$ deformation will also be obtained through $\Theta^3$ and a higher order deformation $\phi_2$ for $\frak g$. To do so, observe that the term $[\phi_1,\phi_1]_{\rm NR}$ in Equation~\ref{eqn:def_deg_2} is trivial when evaluated on terms containing a tensorand in $\mathbb k$, because of Lemma~6.3 in \cite{YB_deform}. Therefore, on simple tensors of type $(0,x)\otimes (1,0)\otimes (0,z)$ and $(0,x)\otimes (0,y)\otimes (1,0)$ Equation~\ref{eqn:def_deg_2} reduces to the evaluation of $\delta^2_{\rm Lie}(\psi_2)$. However, this is exactly the same situation of the proof of Lemma~6.3 in \cite{YB_deform}, which can be repeated, showing that $\psi_2 = \Theta^3(\phi_2)$ for some Lie algebra $2$-cochain. Therefore this has been reduced to the previous situation, for which we already know that the obstruction to the degree $2$ deformation lies in $H^3_{\rm Lie}(\frak g,\frak g)$. This completes the proof of the base of induction.
	
	Let us now assume that the statement has been proved for some $m>1$, and let us consider the case $m+1$. Let then $\phi = \sum_{i=0}^m \hbar^i \phi_i$ be an order $m$ deformation of $\frak g$. We want to show that if the obstruction to extending the deformation of $\frak g$ to order $m+1$ vanishes, then we obtain an order $m+1$ deformation of $X$ via $\Theta^{m+1}$. Observe that from the induction step, we already know that $\psi = \Theta^m(\phi)$ is an order $m$ deformation of $X$.  Let us set $\hat \psi = \psi + \hbar^{m+1} \psi_{m+1}$. Let us consider the SD condition for $\hat \psi$. Since we already know that up to degree $m$ the equation is satisfied, we can discard all terms of degree lower than $m+1$. We consider in what follows the binary case, since the $n$-ary case is a straightforward, although cumbersome, generalization of this. For the LHS of the SD condition in degree $m+1$ we obtain
	\begin{eqnarray}\label{eqn:SD_higher_deform_LHS}
\hat \psi(\hat \psi \otimes \mathbb 1) &=& \sum_{i,j=0}^{m+1}\psi_i (\psi_j\otimes \mathbb 1) ,
\end{eqnarray}
For the right hand side of the SD property, we have
\begin{eqnarray}\label{eqn:SD_higher_deform_RHS}
\hat \psi (\hat \psi \otimes \hat \psi)\shuffle (\mathbb 1^{\otimes 2}\otimes \Delta) &=& \sum_{i,j,k=0}^{m+1}\psi_i(\psi_j\otimes \psi_k)\shuffle (\mathbb 1^{\otimes 2}\otimes \Delta)
\end{eqnarray}
Now, using the definition of $\hat \psi$ as the image of $\hat \phi$ through $\Theta^{m+1}$, we can rewrite Equations~\ref{eqn:SD_higher_deform_LHS} and~\ref{eqn:SD_higher_deform_RHS} in terms of $\hat \phi$. For Equation~\ref{eqn:SD_higher_deform_LHS} on simple tensors $(a_1,x_1)\otimes (a_2,x_2)\otimes (a_3,x_3)$ we obtain
\begin{eqnarray*}
	\hat \psi(\hat \psi((a_1,x_1)\otimes (a_2,x_2))\otimes (a_3,x_3)) &=& (a_1a_2a_3, a_2a_3 x + \sum a_3\hbar^i \phi_i(x_1\otimes x_2) + \sum_ja_2\hbar^j \phi_j(x_1\otimes x_3) \\ 
	&& +\sum_{k,\ell}\hbar^{k+\ell} \phi_k(\phi_\ell (x_1\otimes x_2)\otimes x_3) ).
\end{eqnarray*}
For Equation~\ref{eqn:SD_higher_deform_RHS} evaluated on simple tensors as above, and setting $\shuffle (\mathbb 1^{\otimes 2}\otimes \Delta) := \Delta_\shuffle$, we find
\begin{eqnarray*}
\hat \psi (\hat \psi\otimes \hat \psi) \Delta_\shuffle(a_1,x_1)\otimes (a_2,x_2)\otimes (a_3,x_3) &=& (a_1a_2a_3, a_2a_3x_1 + \sum_i a_2\hbar^i \phi_i(x_1\otimes x_3)\\
&& + \sum_j a_3\hbar^j \phi_j(x_1\otimes x_2) + \sum_{k,\ell} \hbar^{k+\ell}[\phi_k(\phi_\ell(x_1\otimes x_3)\otimes x_2)\\
&& + \phi_k(x_1\otimes \phi_\ell(x_2\otimes x_3))]).
\end{eqnarray*} 
From the inductive assumption, up to degree $m$ in the powers of $\hbar$, the SD property holds, so we can restrict ourselves to degrees of order $m+1$ (higher orders vanish modulo $\hbar^{m+2}$). For the LHS of the SD property we obtain 
\begin{eqnarray*}
[\hat \psi(\hat \psi((a_1,x_1)\otimes (a_2,x_2))\otimes (a_3,x_3))]_{{\rm deg}=m+1} &=& a_3\phi_{m+1}(x_1\otimes x_2) + a_2 \phi_{m+1}(x_1\otimes x_3)\\
&& + \phi_{m+1}(\phi_0(x_1\otimes x_2)\otimes x_3) + \phi_0(\phi_{m+1}(x_1\otimes x_2)\otimes x_3)\\
 &&+ \sum_{i,j} \phi_i(\phi_j(x_1\otimes x_2)\otimes x_3)
\end{eqnarray*}
and for the RHS we have
\begin{eqnarray*}
\lefteqn{[\hat \psi (\hat \psi\otimes \hat \psi) \Delta_\shuffle(a_1,x_1)\otimes (a_2,x_2)\otimes (a_3,x_3)]_{{\rm deg}=m+1}}\\
 &=& a_2\phi_{m+1}(x_1\otimes x_3) + a_3\phi_{m+1}(x_1\otimes x_2) + \phi_{m+1}(\phi_0(x_1\otimes x_3)\otimes x_2) + \phi_0(\phi_{m+1}(x_1\otimes x_3)\otimes x_2)\\
&& + \phi_{m+1}(x_1\otimes \phi_0(x_2\otimes x_3)) + \phi_0(x_1\otimes \phi_{m+1}(x_2\otimes x_3))\\
&& + \sum_{i,j} [\phi_i(\phi_j(x_1\otimes x_3)\otimes x_2) + \phi_i(x_1\otimes \phi_j(x_2\otimes x_3))].
\end{eqnarray*}
We observe that all the terms containing $a_i$ cancel out, and what is left can be rewritten as
$$
\delta^2_{\rm Lie} (\phi_{m+1}) + \frac{1}{2}[\phi_{\geq 1},\phi_{\geq 1}]_{\rm NR} = 0,
$$	
which is the obstruction for $\phi$ to be extended to a degree $m+1$ deformation as a Lie algebra \cite{NR}. This completes the inductive step for the first part of the statement. We need to show, now, that when $\frak g$ has trivial center, the deformations in the form of $\Theta^{m+1}(\phi_{m+1})$ are the only ones. Let $\psi = \sum_{i=0}^m\hbar^i \psi_i$ be an order $m$ deformation and set
$\hat \psi = \sum_{i=0}^{m+1} \hbar^i \psi_i$. Then, if $\psi_{m+1}$ is such that $\hat \psi$ is an order $m+1$ deformation, we can equate the LHS and RHS of the SD property at degree $m+1$ substantially following the computation given before, which for the LHS gives
	\begin{eqnarray*}
[\hat \psi(\hat \psi \otimes \mathbb 1)]_{{\rm deg} = m+1} &=& \psi_0 (\psi_{m+1}\otimes \mathbb 1) + \psi_{m+1}(\psi_0\otimes \mathbb 1) + \sum \psi_i(\psi_j\otimes \mathbb 1),
\end{eqnarray*}
where the sum runs over the pairs $(i,j)$ such that $i+j = m+1$ and $i,j\neq m+1$. For the RHS of the SD property, we have
\begin{eqnarray*}
[\hat \psi (\hat \psi \otimes \hat \psi)\shuffle (\mathbb 1^{\otimes 2}\otimes \Delta)]_{{\rm deg} = m+1} &=& \psi_{m+1}(\psi_0\otimes \psi_0)\shuffle (\mathbb 1^{\otimes 2}\otimes \Delta)\\ \notag
&& + \psi_0(\psi_{m+1}\otimes \psi_0 )\shuffle (\mathbb 1^{\otimes 2}\otimes \Delta)\\ \notag
&& + \psi_0(\psi_0\otimes \psi_{m+1} )\shuffle (\mathbb 1^{\otimes 2}\otimes \Delta)\\ \notag
&& + \sum \psi_i(\psi_j\otimes \psi_k )\shuffle (\mathbb 1^{\otimes 2}\otimes \Delta), \notag
\end{eqnarray*}
where the sum runs over $i,j,k\neq m+1$ such that $i+j+k = m+1$. Observe that equating we obtain a term that coincides with $\delta^2_{\rm SD}(\psi_{m+1})$ and a term that only contains terms $\psi_i$ where $i<m+1$, which we will denote $\Omega_{m+1}$. By the inductive assumption, all the maps $\psi_{i<m+1}$ appearing in $\Omega_{m+1}$ are obtained as $\Theta^{i}(\phi_i)$ for some Lie $2$-cochain $\phi_i: \frak g\otimes \frak g\longrightarrow \frak g$. A direct computation shows that $\Omega_{m+1}$, as a result, is a special $2$-cochain mapping $\frak g\otimes \frak g\longrightarrow \frak g$, and being trivial when evaluated on $X_0 := \mathbb k\otimes \mathbb k \oplus \mathbb k \otimes \frak g\oplus \frak g\otimes \mathbb k$. As a consequence, the SD property in degree $m+1$ evaluated on $X_0$ reduces to the $2$-cocycle condition for $\psi_{m+1}$ evaluated on $X_0$. However, from Lemma~6.3 in \cite{YB_deform} we know that this forces $\psi_{m+1}$ to be special, and therefore in the image of $\Theta^{m+1}$. It follows now that if $\phi_{m+1}$ is such that $\Theta^{m+1}(\phi_{m+1}) = \psi_{m+1}$, the obstruction in terms of the Lie algebra $\frak g$ for $\phi_{m+1}$ needs to vanish, and the proof is complete.
\end{proof}

As a direct consequence of the previous result, we obtain the following rigidity criterion for SD structures. 

\begin{corollary}\label{cor:SD_rigid}
	Let $(X,T,\Delta)$ denote the $n$-rack object associated to a semisimple Lie algebra $\frak g$. Then $X$ cannot be deformed as an SD structure.
\end{corollary}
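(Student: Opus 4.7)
The plan is to invoke Theorem~\ref{thm:quandle_deform} in combination with the classical rigidity results of Whitehead for semisimple Lie algebras. First, I would observe that any semisimple Lie algebra $\frak g$ (over a field of characteristic zero) has trivial center, so the hypotheses of the stronger half of Theorem~\ref{thm:quandle_deform} apply: every SD deformation of $X = \mathbb k \oplus \frak g$ at any order is forced to be of the form $\Theta^{m+1}(\phi_{m+1})$, i.e.\ to arise from a Lie algebra deformation of $\frak g$.

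Next, I would invoke Whitehead's second lemma, which gives $H^2_{\rm Lie}(\frak g, \frak g) = 0$ for any semisimple Lie algebra $\frak g$ in characteristic zero. This says precisely that $\frak g$ admits no nontrivial infinitesimal Lie deformations. Applying the $m=1$ case of Theorem~\ref{thm:quandle_deform}, this rules out any nontrivial infinitesimal SD deformation of $X$. Since the order-$1$ truncation of any deformation of positive order is itself an infinitesimal deformation, the vanishing at infinitesimal order propagates to every order, and hence no nontrivial SD deformation of $X$ exists.

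The argument is essentially a two-line reduction, so the only subtlety I anticipate is in the $n$-ary setting, where one must replace Whitehead's lemma by its $n$-Lie analogue: namely that a semisimple $n$-Lie algebra (in the sense of Ling/Filippov) has vanishing second adjoint cohomology. I would therefore present the proof for the binary case directly, and for general $n$ cite the corresponding $n$-ary rigidity result to justify the same conclusion via Theorem~\ref{thm:quandle_deform}. No new computation should be required beyond the identifications already established in the previous theorem.
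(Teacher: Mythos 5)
Your proposal is correct and follows essentially the same route as the paper: both arguments combine the trivial-center clause of Theorem~\ref{thm:quandle_deform} (every SD deformation is special, i.e.\ comes from a Lie $2$-cochain via $\Theta$) with Whitehead's second lemma $H^2_{\rm Lie}(\frak g,\frak g)=0$ to conclude rigidity. The only organizational difference is that the paper phrases the second ingredient as ``there are no non-special deformations'' and cites \cite{YB_deform} for it, whereas you extract the same fact from the trivial-center half of Theorem~\ref{thm:quandle_deform}; your caveat about needing the $n$-ary analogue of Whitehead's lemma in the $n$-Lie case is a fair and worthwhile precision that the paper leaves implicit.
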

\begin{proof}
	Observe that from Theorem~\ref{thm:quandle_deform} there are no deformations due to terms purely in the Lie algebra, i.e. special, since the second cohomology group of $\frak g$ is trivial. Then the result follows once we show that there are no deformations that are not special. This was shown in \cite{YB_deform}, therefore concluding the proof. 
\end{proof}

\section{Higher deformations of Yang-Baxter operators}\label{sec:higher_YB}

 We now consider the effect of integrating infinitesimal deformations, and we show that the obstruction to lifting YB deformations lies in the $n$-Lie algebra cohomology, under suitable conditions. This construction gives a way of producing higher deformations of YB operators corresponding to an $n$-Lie algebra. 

We start by considering certain types of YB cochains that arise from Lie algebra cochains, and study the obstruction to higher order YB deformations. 

\begin{definition}
	{\rm 
			 For $\phi : \frak g\otimes \frak g \longrightarrow \frak g$ a Lie algebra $2$-cochain with coefficients in $\frak g$, we construct a YB $2$-cochain $\Lambda^2(\phi) : X\otimes X \longrightarrow X\otimes X$ through the assignment 
			\begin{eqnarray*}
				\Lambda^2(\phi)((a,x)\otimes (b,y)) = (b,y)^{(1)} \otimes \phi (x\otimes \pi_1((b,y)^{(2)})),
			\end{eqnarray*}
			where $\pi_1 : X \longrightarrow \frak g$ projects on the second coordinate. 
			We will call these cochains {\it $\Lambda$-cochains}, and the corresponding deformations will be called $\Lambda$-deformations.
	}
\end{definition}
Such a correspondence defines a subclass of YB $2$-cochains, and we will show that the deformation theory of such cochains directly relates to that of the Lie algebra $\frak g$. In the following we assume $\mathbb k$ to have zero characteristic.
 We use the notation found in \cite{NR} for the Lie bracket $[\xi,\chi]$ in the space of alternating maps. We also introduce the following sets
$$
\Gamma_m := \{(i,j,k) \in \mathbb N^{\times 3} \ |\ i+j+k = m,\ i,j,k \neq m \}.
$$
We further introduce the decomposition $\Gamma_m = \sqcup_{l=1,2,3}  \Gamma_m^l \sqcup \hat \Gamma_m$, where $\Gamma_m^l$ is the subset of $\Gamma_m$ such that the triples $(i,j,k)$ have zero in the entry $l$, and $\hat \Gamma_m$ is the subset such that no entry is zero. Observe that from the definition of $\Gamma_m$ at least two entries in $(i,j,k)$ need to be nonzero. 

\begin{theorem}\label{thm:Higher_obstruction}
	Let $\frak g$ be an $n$-Lie algebra, and let $X$ denote the corresponding $n$-ary SD object, with $R$ the induced YB operator. Assume that $\hat R = \sum_{i=0}^m \hbar^i R_i$ is a deformation of order $m$, with $R_0 := R$ and $R_i = \Lambda^2(\phi_i)$ for Lie algebra $2$-cochains $\phi_i$. Then the obstruction to deforming $R$ to degree $m+1$ is given by
	\begin{eqnarray}\label{eqn:YB_obstruction}
	\delta^2_{\rm Lie} \phi_{m+1} + \sum_{k=1}^m \frac{1}{2} [\phi_k,\phi_{m+1-k}] = 0.
	\end{eqnarray}
\end{theorem}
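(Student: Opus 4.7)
The plan is to proceed by direct expansion of the Yang-Baxter equation for the candidate extension $\hat R + \hbar^{m+1} R_{m+1}$ with $R_{m+1} := \Lambda^2(\phi_{m+1})$, and then read off the coefficient of $\hbar^{m+1}$. By the hypothesis that $\hat R$ is already an order $m$ deformation, terms of degree $\le m$ cancel automatically, while degrees greater than $m+1$ vanish modulo $\hbar^{m+2}$; thus the entire obstruction lives in the $\hbar^{m+1}$-slice of the equation.

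First I would split the $\hbar^{m+1}$-coefficient of $(\hat R \otimes \mathbb 1)(\mathbb 1 \otimes \hat R)(\hat R \otimes \mathbb 1) - (\mathbb 1 \otimes \hat R)(\hat R \otimes \mathbb 1)(\mathbb 1 \otimes \hat R)$ into two groups: (i) terms containing exactly one factor of $R_{m+1}$ together with two factors of $R_0$, which by definition of the YB differential sum to $\delta^2_{\rm YB}(R_{m+1})$; and (ii) terms indexed by triples $(i,j,k) \in \Gamma_{m+1}$, i.e. with $i+j+k = m+1$ and no entry equal to $m+1$. Using the decomposition $\Gamma_{m+1} = \sqcup_{l=1,2,3} \Gamma_{m+1}^l \sqcup \hat \Gamma_{m+1}$, these contributions organize according to how many slots carry an $R_0$.

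The key computational observation is that the image of $\Lambda^2$ is very restricted: for every $i \ge 1$, the cochain $R_i = \Lambda^2(\phi_i)$ annihilates every simple tensor with a factor in the scalar summand $\mathbb k \subset X$, and it takes values in $(1,0) \otimes (\{0\} \oplus \frak g)$. Consequently, every subsequent $R_0$ acting on such an output contributes only a flip plus a bracket-correction coming from $\phi_0$, and the nested triple compositions collapse to explicit nested expressions in the $\phi_i$ when evaluated on a generic simple tensor $(a_1,x_1) \otimes (a_2,x_2) \otimes (a_3,x_3)$ and projected onto $\frak g^{\otimes 3}$ via $\pi_1^{\otimes 3}$. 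The scalar-weighted terms (those carrying factors of the $a_i$) cancel between the two sides of the YB equation, exactly as in the proof of Theorem~\ref{thm:quandle_deform}.

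After this reduction, one identifies the nontrivial $\frak g$-valued component of group (i) with $\delta^2_{\rm Lie}(\phi_{m+1})$, and the corresponding component of group (ii) with $\frac{1}{2}\sum_{k=1}^m[\phi_k,\phi_{m+1-k}]$, in exact parallel with the SD computation of Theorem~\ref{thm:quandle_deform} and the classical Gerstenhaber-Nijenhuis-Richardson framework. The main obstacle will be the combinatorial bookkeeping: one must verify that the contributions from the $\Gamma_{m+1}^l$ pieces and from $\hat \Gamma_{m+1}$ assemble into the Nijenhuis-Richardson bracket with the correct signs and multiplicities. Once this is established, the vanishing of the $\hbar^{m+1}$-coefficient of the YB equation is equivalent to Equation~\ref{eqn:YB_obstruction}, completing the proof; the $n$-ary case for $n > 2$ proceeds by the same argument modulo more elaborate notation.
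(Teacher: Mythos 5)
Your proposal is correct and follows essentially the same route as the paper: isolate the $\hbar^{m+1}$-coefficient of the Yang--Baxter equation, observe that the terms containing one $R_{m+1}$ and two $R_0$'s form $\delta^2_{\rm YB}(\Lambda^2(\phi_{m+1}))$ and collapse to $\delta^2_{\rm Lie}(\phi_{m+1})$, and organize the remaining terms by the decomposition $\Gamma_{m+1}=\sqcup_l\Gamma^l_{m+1}\sqcup\hat\Gamma_{m+1}$ to recover $\tfrac12\sum_k[\phi_k,\phi_{m+1-k}]$ (the paper dresses this up as an induction on $m$, but since the order-$m$ deformation is hypothesized, the content is identical to your direct expansion). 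One small correction: since each $\Lambda^2(\phi_i)$ with $i\geq 1$ outputs $(1,0)$ in its first tensor factor, the surviving obstruction terms all lie in the summand $\mathbb k\otimes\mathbb k\otimes\mathfrak g$, so the relevant projection is $\pi_0\otimes\pi_0\otimes\pi_1$ rather than $\pi_1^{\otimes 3}$, which would annihilate exactly the terms you need to keep.
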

\begin{proof}
		We prove the result for binary SD structure, although the same approach, with notational modifications also give the result for $n$-ary SD structures.
		We proceed by induction on $m$. The case $m=1$ means that we have an infinitesimal deformation of $R$ of type $\Lambda^2(\phi_1)$, and we want to derive the obstruction to lifting this deformation to a quadratic one. From \cite{YB_deform} we already know that $\Lambda^2(\phi_1)$ needs to be a YB $2$-cocycle, and that this fact implies that $\phi_1$ is a Lie algebra $2$-cocycle. Let us consider $\Lambda^2(\phi_2)$, where $\phi_2$ is a Lie $2$-cochain. Here $\phi_0$ indicates the Lie bracket of $\frak g$. Also, observe that $\Lambda^2(\phi_0)$ coincides with the SD operation $q$, as it is seen by a direct computation. For short we will indicate the mapping $\Lambda^2$ simply by $\Lambda$. Let us consider the RHS of the YB equation for the terms which are quadratic in $\hbar$, since we already know that the equation holds for the other terms. Denoting the RHS quadratic terms as $\Psi^R_2$, by evaluating on a simple tensor $(a,x)\otimes (b,y) \otimes (c,z)$ we have
		\begin{eqnarray*}
			\Psi^R_2 (a,x)\otimes (b,y) \otimes (c,z) &=& (\mathbb 1\otimes \Lambda(\phi_0))(\Lambda(\phi_0)\otimes \mathbb 1)((a,x)\otimes (1,0)\otimes \phi_2(y\otimes z)) \\
			&& + \mathbb 1\otimes \Lambda(\phi_0))(\Lambda(\phi_2)\otimes \mathbb 1)((a,x)\otimes (c,z)\otimes (b,y)\\
			&& + (a,x)\otimes (1,0) \otimes (0,[y,z]))\\
			&& + \mathbb 1\otimes \Lambda(\phi_2))(\Lambda(\phi_0)\otimes \mathbb 1)(\Lambda(\phi_2)\otimes \mathbb 1)((a,x)\otimes (c,z)\otimes (b,y)\\
			&& + (a,x)\otimes (1,0) \otimes (0,[y,z]))\\
			&& + (\mathbb 1\otimes \Lambda(\phi_0))(\Lambda(\phi_1)\otimes \mathbb 1)((a,x)\otimes (1,0) \otimes \phi_1(y\otimes z))\\
			&& + (\mathbb 1\otimes \Lambda(\phi_1))(\Lambda(\phi_0)\otimes \mathbb 1)((a,x)\otimes (1,0) \otimes \phi_1(y\otimes z))\\
			&&  (\mathbb 1\otimes \Lambda(\phi_1))(\Lambda(\phi_1)\otimes \mathbb 1)((a,x)\otimes (c,z)\otimes (b,y) \\
			&& + (a,x)\otimes (1,0) \otimes [y,z]) \\
			&=& (1,0)\otimes \phi_2(y\otimes z) \otimes (a,x) + (1,0)\otimes (1,0) \otimes [x,\phi_2(y\otimes z)]\\
			&& + (1,0)\otimes (b,y) \otimes \phi_2(x\otimes z) + (1,0) \otimes (1,0) \otimes [y,\phi_2(x\otimes z)]\\
			&& + (c,z) \otimes (1,0) \otimes \phi_2(x\otimes y) + (1,0) \otimes (1,0) \otimes \phi_2([x,z]\otimes y)\\
			&& + (1,0)\otimes (1,0) \otimes \phi_2(x\otimes [y,z]) \\
			&& + (1,0)\otimes (1,0) \otimes \phi_1(x\otimes \phi_1(y\otimes z)) \\
			&& + (1,0)\otimes (1,0) \otimes \phi_1(\phi_1(x\otimes z)\otimes y),
		\end{eqnarray*}
	where we have indicated terms of type $(0,x)$ for $x\in \frak g$ by $x$, for ease of notation. Similarly, for the LHS of the YB equation in degree $2$, which we indicate by $\Psi^L_2$, we find the equality
	\begin{eqnarray*}
		\Psi^L_2 (a,x)\otimes (b,y) \otimes (c,z) &=& (c,z) \otimes (1,0) \otimes \phi_2(x\otimes y) + (1,0) \otimes (1,0) \otimes [\phi_2(x\otimes y), z] \\
		&& + (1,0) \otimes (b,y) \otimes \phi_2(x\otimes z) + (1,0) \otimes \phi_2(y\otimes z) \otimes (a,x)\\
		&& + (1,0) \otimes (1,0) \otimes \phi_2([x,y]\otimes z)
		+ (1,0) \otimes (1,0) \otimes \phi_1(\phi_1(x\otimes y) \otimes z). 
	\end{eqnarray*}  
	Equating the two terms we find that the YB equation holds if and only if
	\begin{eqnarray*}
	\lefteqn{(1,0)\otimes (1,0) \otimes ([\phi_2(x\otimes y), z] + \phi_2([x,y]\otimes z) + \phi_1(\phi_1(x\otimes y)\otimes z))} \\
	&=& (1,0) \otimes (1,0) \otimes ([x,\phi_2(y\otimes z)] + [y,\phi_2(x\otimes z)] + \phi_2([x,z]\otimes y) + \phi_x(x\otimes [y,z])\\
	&& + \phi_1(x\otimes \phi_1(y\otimes z)) + \phi_1(\phi_1(x\otimes z)\otimes y)).
	\end{eqnarray*}
	Up to two tensorands $(1,0)$, we see that this equation is equivalent to
	$$
	\delta^2_{\rm Lie} \phi_{2} + \sum_{k=1}^m \frac{1}{2} [\phi_1,\phi_1] = 0,
	$$
	 which completes the case $m=1$. 
	 
	 We now suppose that the statement holds true for some $m>1$, and we want to verify it for $m+1$. We let $\phi_i$ be a family of Lie $2$-cochains such that $\hat R = \sum_{i=0}^m \Lambda(\phi_i)$ is a YB deformation of degree $m$, and we want to derive the obstruction for $\Lambda(\phi_{m+1})$, where $\phi_{m+1}$ is a Lie $2$-cochian, to give a deformation of degree $m+1$. From the assumptions, we just need to impose that the YBE holds for terms in degree $m+1$. We observe that when considering the terms of type 
	 $$
	 (\Lambda(\phi_i)\otimes \mathbb 1)(\mathbb 1\otimes \Lambda(\phi_j)) (\Lambda(\phi_k)\otimes \mathbb 1) - (\mathbb 1\otimes \Lambda(\phi_i)) (\Lambda(\phi_j)\otimes \mathbb 1)(\mathbb 1\otimes \Lambda(\phi_k)),
	 $$
	 with $i=m+1$, or $j=m+1$ or $k=m+1$, this gives us the Lie algebra $2$-cocycle condition for $\phi_{m+1}$ up to an overal tensor product of $(1,0)\otimes (1,0)$ as for the case with $m=1$. Therefore, these terms give rise to $\delta^2_{\rm Lie}(\phi_{m+1})$ of Equation~\ref{eqn:YB_obstruction}. Let us now consider the terms where more than one subscript of the $\phi_i$ is nontrivial. We distinguish four different cases, depending on which component of $\Gamma_{m+1}$ the triple $(i,j,k)$ belongs to. We consider first the terms $(\Lambda(\phi_i)\otimes \mathbb 1)(\mathbb 1\otimes \Lambda(\phi_j)) (\Lambda(\phi_k)\otimes \mathbb 1)$. When $(i,j,k)\in \Gamma_{m+1}^1$, we have
	 \begin{eqnarray*}
	 	\lefteqn{(\Lambda(\phi_i)\otimes \mathbb 1)(\mathbb 1\otimes \Lambda(\phi_j)) (\Lambda(\phi_k)\otimes \mathbb 1)((a,x)\otimes (b,y) \otimes (c,z))}\\
	 	 &=& (\Lambda(\phi_0)\otimes \mathbb 1)(\mathbb 1\otimes \Lambda(\phi_k)) (\Lambda(\phi_{m+1-k})\otimes \mathbb 1)((a,x)\otimes (b,y) \otimes (c,z))\\
	 	 &=& (1,0) \otimes (1,0) \otimes \phi_k(\phi_{m+1-k}(x\otimes y)\otimes z).
	 \end{eqnarray*}
	A direct inspection shows that when $(i,j,k) \in \Gamma_{m+1} - \Gamma_{m+1}^1$, the term $(\Lambda(\phi_i)\otimes \mathbb 1)(\mathbb 1\otimes \Lambda(\phi_j)) (\Lambda(\phi_k)\otimes \mathbb 1)((a,x)\otimes (b,y) \otimes (c,z))$ vanishes for all simple tensors. When considering the terms of type $(\mathbb 1\otimes \Lambda(\phi_i)) (\Lambda(\phi_j)\otimes \mathbb 1)(\mathbb 1\otimes \Lambda(\phi_k))$, we have that for $(i,j,k) \in \Gamma_{m+1}^1, \hat \Gamma_{m+1}$ the terms vanish identically, while for $(i,j,k) \in \Gamma_{m+1}^2$ we obtain $(1,0)\otimes (1,0) \otimes \phi_k(x\otimes \phi_{m+1-k}(y\otimes z))$, and for $(i,j,k) \in \Gamma_{m+1}^3$ we obtain $(1,0) \otimes (1,0) \otimes \phi_k(\phi_{m+1-k}(x\otimes z)\otimes y)$. Therefore, for each $k = 1, \ldots , m$ we obtain (up to a tensor factor) $\frac{1}{2} [\phi_k,\phi_{m+1-k}]$. Putting all the terms together completes the proof. 
\end{proof}

	A perturbative expansion of a YB operator is a deformed YB operator with higher order (i.e. at least quadratic) deformations.

	\begin{corollary}
		Let $\frak g$ be an $n$-Lie algebra, and let $R$ denote the associated YB operator. The obstruction to lifting a degree $k$ $\Lambda$-deformation to a degree $k+1$ $\Lambda$-deformation lies in the third Lie algebra cohomology group $H^3(\frak g, \frak g)$.  
	\end{corollary}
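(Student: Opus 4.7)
The plan is to read the corollary directly off Theorem~\ref{thm:Higher_obstruction}. Given a degree $k$ $\Lambda$-deformation $\hat R = \sum_{i=0}^k \hbar^i \Lambda^2(\phi_i)$, the theorem states that a Lie $2$-cochain $\phi_{k+1}$ produces a degree $k+1$ extension $\hat R + \hbar^{k+1}\Lambda^2(\phi_{k+1})$ if and only if
\begin{eqnarray*}
\delta^2_{\rm Lie}\phi_{k+1} \;=\; -\frac{1}{2}\sum_{j=1}^{k} [\phi_j,\phi_{k+1-j}].
\end{eqnarray*}
Setting $\Omega_{k+1} := -\frac{1}{2}\sum_{j=1}^{k} [\phi_j,\phi_{k+1-j}] \in C^3_{\rm Lie}(\frak g,\frak g)$, solvability is equivalent to $\Omega_{k+1}$ being a $\delta^2_{\rm Lie}$-coboundary. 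So the obstruction is naturally the cohomology class $[\Omega_{k+1}] \in H^3_{\rm Lie}(\frak g,\frak g)$, provided one first verifies that $\Omega_{k+1}$ is itself a $3$-cocycle.

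The crucial step is therefore to show $\delta^3_{\rm Lie}\Omega_{k+1}=0$. My approach would be the standard Gerstenhaber-Nijenhuis-Richardson argument: the cochain complex $C^\bullet_{\rm Lie}(\frak g,\frak g)$ carries the graded Lie bracket $[\cdot,\cdot]_{\rm NR}$ of \cite{NR}, and the Lie differential satisfies $\delta_{\rm Lie}(-) = \pm[\phi_0,-]_{\rm NR}$, where $\phi_0$ is the $n$-Lie bracket of $\frak g$. The inductive assumption is that Theorem~\ref{thm:Higher_obstruction} has held at every intermediate order, so $\delta^2_{\rm Lie}\phi_j + \frac{1}{2}\sum_{i=1}^{j-1}[\phi_i,\phi_{j-i}] = 0$ for all $1\le j\le k$. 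Expanding $\delta^3_{\rm Lie}\Omega_{k+1}$ as $\pm[\phi_0,\Omega_{k+1}]_{\rm NR}$ and invoking the graded Jacobi identity reassembles the result into a telescoping combination of these lower-order identities, forcing the expression to vanish.

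The main obstacle I foresee is the combinatorial bookkeeping of this cocycle verification: one has to re-index double sums so that pairs $[\phi_j,\phi_{k+1-j}]$ are matched with the correct $\delta^2_{\rm Lie}\phi_\ell$ terms supplied by the lower-order obstruction equations. In the binary case this is classical Nijenhuis-Richardson theory, and the $n$-ary analogue of the graded Lie structure on Lie cochains is available \cite{NR}, so the argument transfers \emph{mutatis mutandis}. Once cocycleness is established, vanishing of $[\Omega_{k+1}] \in H^3_{\rm Lie}(\frak g,\frak g)$ is equivalent to liftability of the $\Lambda$-deformation, completing the proof.
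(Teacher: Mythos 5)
Your proposal is correct and follows essentially the same route as the paper: both reduce the statement to Theorem~\ref{thm:Higher_obstruction} and then invoke the classical fact that the Lie-algebraic obstruction $-\frac{1}{2}\sum_j[\phi_j,\phi_{k+1-j}]$ is a $3$-cocycle whose class in $H^3_{\rm Lie}(\frak g,\frak g)$ controls liftability. The only difference is that you sketch the Nijenhuis--Richardson cocycle verification explicitly, whereas the paper simply cites \cite{NR} for the binary case and \cite{Tak} for the $n$-ary case (the latter is the more accurate reference for the $n$-ary graded Lie structure you attribute to \cite{NR}).
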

	\begin{proof}
			Given a degree $k$ $\Lambda$-deformation, from Theorem~\ref{thm:Higher_obstruction} we see that the obstruction coincides with the Lie algebra obstruction of degree $k+1$, which is well known to lie in $H^3(\frak g, \frak g)$ (see for instance \cite{NR} for the binary case, and \cite{Tak} for the $n$-ary case). 
	\end{proof}

	The following result, concerning the perturbative expansion of YB operators, is now immediate.
	\begin{corollary}\label{cor:perturbative_lambda}
		Let $\frak g$ be an $n$-Lie algebra, and let $R$ denote the associated operator. Assume that a nontrivial infinitesimal $\Lambda$-deformation of $R$ exists. Then, if $H^3(\frak g, \frak g) = 0$ we can deform $R$ arbitrarily many times.
	\end{corollary}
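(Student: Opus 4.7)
The plan is to proceed by induction on the order $m$ of the deformation, using the previous Corollary (which identifies obstructions with classes in $H^3(\frak g,\frak g)$) as the engine. The base case $m=1$ is handed to us by hypothesis: we have a nontrivial infinitesimal $\Lambda$-deformation $R + \hbar\,\Lambda^2(\phi_1)$ of $R$, and Theorem~\ref{thm:Higher_obstruction} (in the case $m=1$) already confirms this is a consistent order-$1$ deformation, with $\phi_1$ a Lie algebra $2$-cocycle.

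For the inductive step, suppose we have built a $\Lambda$-deformation $\hat R = \sum_{i=0}^m \hbar^i \Lambda^2(\phi_i)$ of order $m$. By Theorem~\ref{thm:Higher_obstruction}, extending this to order $m+1$ amounts to finding a Lie $2$-cochain $\phi_{m+1}$ solving
\begin{equation*}
\delta^2_{\rm Lie}\phi_{m+1} \;=\; -\sum_{k=1}^{m} \tfrac{1}{2}[\phi_k,\phi_{m+1-k}].
\end{equation*}
The first thing I would do is observe that the RHS is a Lie $3$-cocycle; this is the classical Nijenhuis--Richardson computation (\cite{NR} for the binary case, \cite{Tak} for the $n$-ary case), which shows that the ``squared" term built from lower-order cochains is automatically $\delta^2_{\rm Lie}$-closed once the deformation has been consistent through degree $m$. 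Once this is in place, the hypothesis $H^3(\frak g,\frak g) = 0$ guarantees that this $3$-cocycle is in fact a $3$-coboundary, hence a suitable $\phi_{m+1}$ exists.

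Setting $R_{m+1} := \Lambda^2(\phi_{m+1})$ and $\hat R' := \hat R + \hbar^{m+1} R_{m+1}$, Theorem~\ref{thm:Higher_obstruction} then certifies that $\hat R'$ is a $\Lambda$-deformation of order $m+1$. Iterating, we obtain a compatible tower of $\Lambda$-deformations of every finite order, which assembles into a formal series $\sum_{i\geq 0} \hbar^i \Lambda^2(\phi_i)$ deforming $R$ to all orders in $\hbar$.

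The main obstacle I expect is not the induction itself but the bookkeeping behind the cocycle claim for the RHS above: one must confirm that the graded bracket $\tfrac{1}{2}\sum_k [\phi_k,\phi_{m+1-k}]$ is $\delta^2_{\rm Lie}$-closed purely as a formal consequence of the order-$m$ deformation equations, without appealing to any specific $\phi_{m+1}$. This is standard in the binary Lie case but needs to be invoked carefully in the $n$-ary setting; fortunately, Takhtajan's extension \cite{Tak} of the Nijenhuis--Richardson framework is precisely what is needed, so the argument reduces to citing the appropriate $n$-ary analogue rather than re-deriving it.
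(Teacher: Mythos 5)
Your proposal is correct and follows essentially the same route as the paper, which states the corollary as an immediate consequence of the preceding corollary (the obstruction to each lift lies in $H^3(\frak g,\frak g)$, hence vanishes by hypothesis, and one inducts on the order). Your additional care in flagging that the quadratic term $\tfrac{1}{2}\sum_k[\phi_k,\phi_{m+1-k}]$ must be verified to be $\delta^2_{\rm Lie}$-closed is exactly the content implicitly delegated to \cite{NR} and \cite{Tak} in the paper, so nothing is missing.
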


	The results above give a procedure to start with a Lie algebra $\frak g$, obtain a YB operator over the $\mathbb k$-module, and then produce a perturbative series $\hat R = \sum_{i=0}^\infty \hbar^i R_i$, where $R_0 = R$, that satisfies the YB equation over the $\mathbb k[[\hbar]]$-module $\hat X = \mathbb k[[\hbar]]\otimes X$. This can automatically be done whenever $\frak g$ has nontrivial second cohomology, and trivial third cohomology.

	However, from Theorem~\ref{thm:quandle_deform} we also obtain that when $\mathfrak g$ has trivial center, and $H^2_{\rm Lie}(\mathfrak g,\mathfrak g) = 0$, the corresponding YB operator does not admit $\Lambda$-deformations.
	
	\begin{corollary}
			Let $\mathfrak g$ be an $n$-Lie algebra with trivial center and $H^2_{\rm Lie}(\mathfrak g,\mathfrak g) = 0$. Let $R$ denote the corresponding YB operator. Then, $R$ does not admit  $\Lambda$-deformations.
	\end{corollary}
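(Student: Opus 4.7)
The plan is to show that every infinitesimal $\Lambda$-deformation $R + \hbar \Lambda^2(\phi)$ is cohomologically trivial in $H^2_{\rm YB}$, so that no nontrivial $\Lambda$-deformation of any order can exist. By the $m = 1$ case of Theorem~\ref{thm:Higher_obstruction}, $\Lambda^2(\phi)$ being a YB $2$-cocycle is equivalent to $\phi$ being a Lie algebra $2$-cocycle of $\mathfrak g$. Since $H^2_{\rm Lie}(\mathfrak g, \mathfrak g) = 0$, we may write $\phi = \delta^1_{\rm Lie}(f)$ for some linear $f : \mathfrak g \longrightarrow \mathfrak g$, so the task reduces to showing that $\Lambda^2$ carries Lie $1$-coboundaries to YB $1$-coboundaries.

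\textbf{Key computation.} I would define the lift $\tilde f : X \longrightarrow X$ by $\tilde f(a, x) := (0, f(x))$, which is immediately a coderivation of $(X, \Delta)$ and therefore a valid YB $1$-cochain, and claim the identity $\Lambda^2(\delta^1_{\rm Lie}(f)) = -\delta^1_{\rm YB}(\tilde f)$. Using the Sweedler decomposition $\Delta(b, y) = (b, y) \otimes (1, 0) + (1, 0) \otimes (0, y)$ together with $q((a, x) \otimes (b, y)) = (ab, bx + [x, y])$, a direct expansion yields $R((a, x) \otimes (b, y)) = (b, y) \otimes (a, x) + (1, 0) \otimes (0, [x, y])$ and $\Lambda^2(\phi)((a, x) \otimes (b, y)) = (1, 0) \otimes (0, \phi(x, y))$. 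Plugging into the four summands of $\delta^1_{\rm YB}(\tilde f)$, the two pairs of terms whose first tensorand lies outside the base ring cancel, and the surviving $(1, 0) \otimes (0, \cdot)$ contribution aggregates to $(1, 0) \otimes (0, [f(x), y] + [x, f(y)] - f([x, y]))$, which up to a global sign is $\Lambda^2(\delta^1_{\rm Lie}(f))((a, x) \otimes (b, y))$.

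\textbf{Conclusion and main obstacle.} Once this identity is in place, $\Lambda^2(\phi) = \delta^1_{\rm YB}(-\tilde f)$ exhibits the infinitesimal $\Lambda$-deformation as trivial; an induction on the order with gauge-fixing of the leading term at each step via the same identity then rules out higher-order nontrivial $\Lambda$-deformations. The trivial-center hypothesis enters through Theorem~\ref{thm:quandle_deform}, which pins down $\Lambda^2$-cochains as precisely the image of $\Theta^2$ applied to Lie $2$-cochains, so that the vanishing of $H^2_{\rm Lie}(\mathfrak g, \mathfrak g)$ genuinely exhausts the relevant cohomology. The main obstacle is the bookkeeping of the eight terms produced by the Sweedler expansions in the middle step: the cancellations rely on the specific form of $q$ and on the fact that $\phi$ takes values in $\mathfrak g \subset X$, so each term must be tracked carefully to confirm that precisely the Lie-coboundary survives.
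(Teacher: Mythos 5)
Your argument is correct in its core and takes a more explicit route than the paper. The paper's proof factors through self-distributive structures: it invokes Theorem~\ref{thm:quandle_deform} and the results of \cite{YB_deform} to identify $\Lambda$-deformations with SD deformations and these in turn with Lie deformations, and then concludes from the rigidity of $\mathfrak g$. You instead work directly in YB cohomology: you use that $\Lambda^2(\phi)$ being a YB $2$-cocycle forces $\phi$ to be a Lie $2$-cocycle, write $\phi=\delta^1_{\rm Lie}(f)$ using $H^2_{\rm Lie}(\mathfrak g,\mathfrak g)=0$, and exhibit the explicit primitive $\tilde f(a,x)=(0,f(x))$ with $\Lambda^2(\delta^1_{\rm Lie}(f))=\delta^1_{\rm YB}(\tilde f)$ (your overall sign is a convention issue in $\delta^1_{\rm Lie}$ and is harmless). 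I checked this identity: with $R((a,x)\otimes(b,y))=(b,y)\otimes(a,x)+(1,0)\otimes(0,[x,y])$ and $\tilde f(1,0)=0$, the four terms of $\delta^1_{\rm YB}(\tilde f)$ collapse to $(1,0)\otimes(0,[f(x),y]+[x,f(y)]-f([x,y]))$, which is exactly $\Lambda^2(\delta^1_{\rm Lie}(f))$. This buys something the paper's terse proof does not: a concrete cobounding $1$-cochain, which makes the triviality of the infinitesimal deformation completely transparent.

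Two caveats. First, your inductive step for higher orders is where the real work is hidden: after gauging away the leading term by conjugating with $\mathbb 1+\hbar\tilde f_1$, the corrected order-two term acquires contributions such as $\Lambda^2(\phi_1)(\tilde f_1\otimes\mathbb 1+\mathbb 1\otimes\tilde f_1)$ and second-order pieces of the conjugation acting on $R$, and these are not visibly of the form $\Lambda^2(\psi)$. To rerun your first-order argument you must show the new leading cocycle is still special (equivalently, lands in the image of $\Lambda^2$), and that is precisely where the trivial-center hypothesis enters, via Lemma~6.3 of \cite{YB_deform} -- not, as you suggest, in ``pinning down $\Lambda$-cochains,'' which are already in the image of $\Lambda^2$ by definition. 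Second, and relatedly, your attribution of the role of the center is therefore misplaced rather than wrong in substance; the first-order identity you prove needs no hypothesis on the center at all. With the induction step repaired along these lines your proof is complete and, in my view, more informative than the one in the text.
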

	\begin{proof}
		 Theorem~\ref{thm:quandle_deform} and the results of \cite{YB_deform} show that deforming the Lie algebra structure is equivalent to deforming the SD structure. The rigidity of $\mathfrak g$ implies that the SD structure is rigid as well. A direct computation shows that $\Lambda$-deformations are equivalent to deformations of the underlying SD structure, completing the proof.
	\end{proof}

\section{More on second cohomology}\label{sec:coh}

It is of interest to consider more in detail a study of the second cohomology group of YB operators. In fact, in order to be able to produce perturbative expansions (of any degree), one needs the second cohomology group to be nontrivial. 

In this section we assume that the ground field $\mathbb k$ is of zero characteristic, and $\frak g$ indicates a (binary) Lie algebra. In the following, we assume the convention, inspired by Sweedler's notation, that a map $\phi : A \longrightarrow B\otimes B$ is written as $\phi(x) = \phi(x)_1\otimes \phi(x)_2$, where a summation is intended.

\begin{lemma}\label{lem:2_cocy}
	Let $X = \mathbb k \oplus \frak g$, and let $R$ be the YB operator associated to $\frak g$, which is assumed to have trivial center  and to be perfect (e.g. it is semisimple). Suppose that $\phi$ is a YB $2$-cocycle. Then, $\phi$ is characterized by the following conditions:
	\begin{itemize}
		\item[(i)]
			$(\pi_0\otimes \pi_0)\phi : \frak g\otimes \frak g \longrightarrow \mathbb k$ is a Lie algebra $2$-cocycle with coefficients in $\mathbb k$.
		\item[(ii)] 
			$\phi(\mathbb k\otimes \mathbb k) = 0$. 
		\item[(iii)]	
			$(\pi_1\otimes \pi_0)\phi (\mathfrak g\otimes \mathfrak g) = 0$.
		\item[(iv)] 
			$\phi((1,0)\otimes (0,x)) = (1,0)\otimes g(x)$ and $\phi((0,x)\otimes (1,0)) = - (1,0) \otimes g(x)$ for some Lie algebra derivation $g: \frak g \longrightarrow \frak g$. 
		\item[(v)] 
			$\delta^2_{\rm Lie}((\pi_0\otimes \pi_1)\phi)(x\otimes y\otimes z) = [[x,\pi_1\phi(y\otimes z)_1],\pi_1\phi(y\otimes z)_2] - [g(x),[y,z]]$.
		\item[(vi)] It holds:
			\begin{eqnarray*}
				\begin{aligned}
					\lefteqn{\phi([x,y]\otimes z)_1 \otimes \phi([x,y]\otimes z)_2}\\
					 &=& \phi(y\otimes z)_1 \otimes  [x,\pi_1\phi(y\otimes z)_2] + \phi(x\otimes z)_1 \otimes  [\pi_1\phi(x\otimes z)_1,y].
				\end{aligned}
			\end{eqnarray*}
		\item[(vii)] It holds:
				\begin{eqnarray*}
					\begin{aligned}
						&&  [\pi_1\phi(x\otimes y)_1,z] \otimes \phi(x\otimes y)_2 + \phi(x\otimes y)_1\otimes [\pi_1\phi(x\otimes y),z] \\
						&& +   [y,\pi_1\phi(x\otimes z)_1]\otimes \phi(x\otimes z)_2 - 1 \otimes g(y) \otimes [x,z] + 1\otimes g(z) \otimes [x,y]\\
						&=& \phi(y\otimes z)_2 \otimes [x,\pi_1\phi(y\otimes z)_1] - [y,z]+ \otimes g(x)\\ 
						&& + \phi([x,z]\otimes y)_1 \otimes \phi([x,z]\otimes y)_2 + \phi(x\otimes [y,z])_1\otimes \phi(x\otimes [y,z])_2. 
					\end{aligned}
				\end{eqnarray*}
	\end{itemize}
\end{lemma}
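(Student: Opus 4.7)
The plan is to write out the cocycle equation $\delta^2_{\rm YB}(\phi) = 0$ explicitly, using the formula for $R$ induced from the coalgebra and SD structures on $X = \mathbb k \oplus \mathfrak g$, and then evaluate on simple tensors $u_1\otimes u_2 \otimes u_3$ where each $u_i$ is a ``pure'' element of the form $(a_i,0)$ or $(0,x_i)$. Since $X^{\otimes 3}$ splits into eight pure direct summands and $X\otimes X$ splits into four, the cocycle equation becomes a family of equations indexed by (input type, output type); from this tabulation the seven conditions (i)--(vii) will be read off.

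First I would compute $R((a,x)\otimes (b,y)) = (b,y)\otimes (a,x) + (1,0)\otimes (0,[x,y])$ directly from the definitions, and then assemble the six trilinear operators appearing in $\delta^2_{\rm YB}(\phi)$ on each of the eight pure input types. The easy slices come out first: triples in $\mathbb k^{\otimes 3}$ give a trivial equation; triples with a single $\mathfrak g$-slot force $\phi|_{\mathbb k\otimes \mathbb k}$ to vanish, which is (ii), and constrain $\phi|_{\mathbb k\otimes \mathfrak g}$ and $\phi|_{\mathfrak g\otimes \mathbb k}$ to the shape appearing in (iv). At this stage the antisymmetric sign between the two halves of (iv), and the fact that the common map $g$ is a Lie derivation, will be obtained by combining these constraints with perfectness of $\mathfrak g$ (to rewrite every element as a bracket) and triviality of the center (to pin $g$ down uniquely from its commutation identities).

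Next I would treat triples with two or three $\mathfrak g$-slots. Applying the cocycle equation to $(0,x)\otimes (0,y)\otimes (0,z)$ and projecting the output onto $\mathbb k\otimes \mathbb k$ yields the scalar Lie $2$-cocycle identity for $(\pi_0\otimes \pi_0)\phi$, which is (i); projection onto $\mathfrak g\otimes \mathbb k$ gives (iii); and the $\mathbb k\otimes \mathfrak g$ projection, after separating the Lie coboundary of $(\pi_0\otimes \pi_1)\phi$ from the quadratic correction involving $g$ and $\pi_1\phi$, produces (v). Finally (vi) and (vii) will emerge as the two-tensor-valued full components of the cocycle equation on the remaining mixed triples, with (vi) coming from inputs with a bracket in the first slot and a scalar in the last, and (vii) from the companion configuration with the scalar tensor factor in the middle.

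The main obstacle will be condition (iv). The raw cocycle equation initially only gives that $\phi$ on mixed inputs lands in $\mathbb k\otimes \mathfrak g \oplus \mathfrak g\otimes \mathbb k$ and satisfies certain bracket identities; to upgrade this to the stated form with a single derivation $g$ I would rewrite any candidate $\mathfrak g\otimes \mathbb k$-component through perfectness, substitute back, and kill it using a different slice of the cocycle equation, then use triviality of the center to identify the two maps $g$ arising from $\phi((1,0)\otimes (0,x))$ and $\phi((0,x)\otimes (1,0))$ as literally the same. Once (iv) is secured the remaining slices unravel essentially algebraically, and the converse direction --- that (i)--(vii) jointly imply $\delta^2_{\rm YB}(\phi) = 0$ --- follows by reassembling the per-component equations.
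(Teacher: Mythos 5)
Your plan coincides with the paper's proof: the paper likewise writes the cocycle identity $\delta^2_{\rm YB}(\phi)=0$ as a single equation in the six trilinear compositions of $R$ and $\phi$, evaluates it on the pure simple tensors of $X^{\otimes 3}=(\mathbb k\oplus\mathfrak g)^{\otimes 3}$, projects onto the direct summands, and uses perfectness and triviality of the center exactly where you predict (to kill the $\mathfrak g\otimes\mathbb k$ and scalar components of the mixed values and to force the two derivations in (iv) to agree up to sign). The only slip is cosmetic --- the output of $\delta^2_{\rm YB}(\phi)$ lives in $X^{\otimes 3}$ (eight summands), not $X\otimes X$, and the scalar $\phi((1,0)\otimes(1,0))=r\cdot(1,0)\otimes(1,0)$ is only killed after bringing in the two-$\mathfrak g$-slot equations and non-abelianness --- but this does not change the argument.
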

\begin{proof}
	Let us set $\hat \phi = \phi_0 + \hbar \phi_1$, where $\phi_0 := R$ and $\phi_1 := \phi$. Then, $\phi$ being a $2$-cocycle means that we have the equality
	\begin{eqnarray}\label{eqn:YB_2cocy}
	\begin{aligned}
	\lefteqn{(\phi_1\otimes \mathbb 1)(\mathbb 1\otimes \phi_0)(\phi_0\otimes \mathbb 1) + 	(\phi_0\otimes \mathbb 1)(\mathbb 1\otimes \phi_1)(\phi_0\otimes \mathbb 1) +
		(\phi_0\otimes \mathbb 1)(\mathbb 1\otimes \phi_0)(\phi_1\otimes \mathbb 1)}\\
	&=&   (\mathbb 1\otimes \phi_1)(\phi_0\otimes \mathbb 1)(\mathbb 1\otimes \phi_0) + (\mathbb 1\otimes \phi_0)(\phi_1\otimes \mathbb 1)(\mathbb 1\otimes \phi_0) + (\mathbb 1\otimes \phi_0)(\phi_0\otimes \mathbb 1)(\mathbb 1\otimes \phi_1).
	\end{aligned}
	\end{eqnarray}
	The proof consists of a tedious direct analysis of the equality evaluated on different types of simple tensors in $X^{\otimes 3}$, where $X = \mathbb k \oplus \frak g$. Recall that we indicate by $\pi_0 : X \longrightarrow \mathbb k$ the projection on the first direct summand, and likewise by $\pi_1: X \longrightarrow \frak g$ the projection on the second summand. Also, recall the (Sweedler inspired) notation $\phi_i (u \otimes v)_{(1)} \otimes \phi_i (u \otimes v)_{(2)}$ to indicate the sum of terms in $X\otimes X$ in the image of $\phi_i$, for $i=0,1$.  To indicate the effect of comultiplication on the term two terms above, we will still utilize the Sweedler notation  for comultiplication, where a superscript is employed. We also use the shorthand $a + x$ for $(a,x)$.
	
	Observe that, in general, we have $\phi_1(1\otimes 1) = r\cdot 1\otimes 1 + 1\otimes v_1 + v_2\otimes 1 + u_i^{(1)}\otimes u_i^{(2)}$, for some $r \in \mathbb k$, some fixed vectors $v_1, v_2 \in \mathbb g$, and $u_i^{(1)}\otimes u_i^{(2)} \in \mathfrak g\otimes \mathfrak g$. 
	However, Equation~\eqref{eqn:YB_2cocy} on simple tensors of type $x\otimes 1\otimes 1$ and $1 \otimes 1 \otimes z$ gives that all terms in $\phi_1(1\otimes 1)$ are zero, except possibly for $r\cdot 1\otimes 1$, where we use the fact that $\mathfrak g$ has trivial center. So, we have $\phi_1(1\otimes 1) = r\cdot 1\otimes 1$. 
	
	We evaluate Equation~\eqref{eqn:YB_2cocy} on tensors of type $x\otimes 1\otimes z$, giving us the equation
	\begin{eqnarray*}
				\lefteqn{r\cdot 1\otimes 1 \otimes [x,z] + 1\otimes [\pi_1\phi_1(x\otimes 1)_1,z] \otimes \phi_1(x\otimes 1)_2 + 1\otimes \phi_1(x\otimes 1)_1\otimes [\pi_1\phi_1(x\otimes 1)_2,z]}\\
				&=& 1\otimes \phi_1([x,z]\otimes 1)_1\otimes \phi_1([x,z]\otimes 1)_2 + \phi_1(1,\otimes z)_1 \otimes 1 \otimes [x,\pi_1\phi_1(1\otimes z)_2] \\
				&& + 1\otimes \phi_1(1\otimes z)_2 \otimes [x,\pi_1\phi_1(1\otimes z)_1] + 1\otimes 1 \otimes [[x,\pi_1\phi_1(1\otimes z)_1],\pi_1(1\otimes z)_2],
	\end{eqnarray*}
	which forces $(\pi_1\otimes \pi_1)\phi_1(1\otimes z) = 0$ for all $z\in \mathfrak g$. 
	
	Equation~\eqref{eqn:YB_2cocy} on tensors of type $x\otimes y\otimes 1$ produces a term in the equation of type $\phi_1([x.y]\otimes 1)_1 \otimes 1 \otimes \phi_1([x,y]\otimes 1)_2$ which cannot have components in $\mathfrak g \otimes \mathbb k \otimes \mathbb k$ because it cannot be balanced by other terms. Therefore, using the fact that $\mathfrak g$ is perfect, it follows that $(\pi_1\otimes \pi_0)\phi_1(x\otimes 1) = 0$ for all $x$ in $\mathfrak g$. Similarly, the component $\mathbb k \otimes \mathbb k \otimes k$ of this equation gives that $(\pi_0\otimes \pi_0)\phi_1(x\otimes 1) = 0$ for all $x$ in $\mathfrak g$. 
	
	Equation~\eqref{eqn:YB_2cocy} on tensors of type $1\otimes y\otimes z$ projected on the components $\mathbb k \otimes \mathbb k \otimes \mathfrak g$ gives
	$$
			1\otimes \pi_0 \phi_1(1\otimes y)_1 \otimes [\pi_1\phi_1(1\otimes y)_2] = \pi_0\phi_1(1\otimes z)_1 \otimes 1 \otimes [\pi_1\phi_1(1\otimes z)_2,y] + 1\otimes \pi_0\phi_1(1\otimes [y,z])_1 \otimes \pi_1\phi_1(1\otimes [y,z])_2,
	$$
	from which we derive that $(\pi_0\otimes \pi_1)\phi_1(1\otimes x) = 1\otimes g(x)$ for some derivation $g : \mathfrak g \longrightarrow \mathfrak g$ of $\mathfrak g$. This is one of the equation characterizing $\phi_1$ in the statement of the lemma.
	Terms projected in $\mathbb k \otimes \mathfrak g \otimes \mathbb k$ give the symmetry
	$$
			[f(y),z] = r\cdot [y,z] + f([y,z]), 
	$$
	for all $y,z$ in $\mathfrak g$, where $f$ is defined through $(\pi_1\otimes \pi_0)\phi_1(1\otimes x) = f(x)\otimes 1$. Moreover, projecting on $\mathbb k \otimes \mathbb k \otimes \mathbb k$ (and using the fact that $\mathfrak g$ is perfect) we find that $(\pi_0\otimes \pi_0)\phi_1(1\otimes x) = 0$ for all $x\in \mathfrak g$. 
	
	Writing the symmetries that we have found up to now more explicitly, we can write $\phi_1(1\otimes x) = f_1(x)\otimes 1 + 1\otimes g_1(x)$, $\phi_1(x\otimes 1) = 1\otimes g_2(x) + h(x)_1\otimes h(x)_2$ and $\phi_1(1\otimes 1) = r\cdot 1\otimes 1$, where $g_1$ is a Lie algebra derivation. 
	
	Equation~\eqref{eqn:YB_2cocy} evaluated on $x\otimes y\otimes z$ projected on $\mathfrak g\otimes \mathfrak g\otimes \mathfrak g$ gives
	$$
			h(y)_1\otimes h(y_2)\otimes [x,z] = h(x)_1\otimes [y,z] \otimes h(x)_2,
	$$
	which taking $x=z$ forces $h(x)_1\otimes h(x)_2$ to be zero. This further simplifies the expression of $\phi_1(x\otimes 1)$. 
	
	Substituting Equation~\eqref{eqn:YB_2cocy} evaluated on $x\otimes z \otimes 1$ projected on $\mathbb k \otimes \mathbb k \otimes \mathfrak g$ into the equation on $x\otimes 1\otimes z$ projected on $\mathbb k \otimes \mathbb k \otimes \mathfrak g$ we get
	$$
			2r[x,z] = [x,g_2(z)] + [x,g_1(z)] + [x,f_1(z)],
	$$
	which gives $[x,2rz + g_2(z) - g_1(x) + f_1(z)] = 0$ for all $x$ and $z$. Since $\mathfrak g$ has trivial center, we get $2rz + g_2(z) - g_1(x) + f_1(z) = 0$. A similar approach also gives that $2ry + g_1(y) - g_2(y) = 0$ for all $y\in \mathfrak g$. Therefore we must have that $f_1(x) = -4rx$ for all $x$. However, from Equation~\eqref{eqn:YB_2cocy} evaluated on $1\otimes x\otimes y$ and projected on $\mathbb k \otimes \mathbb k \otimes \mathbb \mathfrak g$ we obtain that $[f_1(x), y] = r[x,y] + f_1([x,y])$. Substituting the $f_1$ just obtained we find that $r[x,y] = 0$ for all $x,y$, which is possible only if $r=0$, since $\mathfrak g$ is not abelian. This gives us that $\phi_1(1\otimes 1) = 0$, as in the statement of the lemma. Moreover, we also have that $g_2 = -g_1$. This completes the proof of facts (ii), (iii) and (iv).
	
	The proof of (i), (vi) and (vii) is obtained by considering Equation~\eqref{eqn:YB_2cocy} evaluated on $x\otimes y \otimes z$ projected on the  direct summands of $X\otimes X\otimes X$. In fact, projecting over $\mathbb k \otimes \mathbb k \otimes \mathbb k$ we obtain the equation 
	$$
			\alpha([x,y]\otimes z) = \alpha([x,z]\otimes y) + \alpha(x\otimes [y,z]),
	$$
	where $\alpha : \mathfrak g\otimes \mathfrak g \longrightarrow \mathbb k$ is given by $\alpha(x\otimes y) := (\pi_0\otimes \pi_0)\phi_1(x\otimes y)$. This equation is the $2$-cocycle condition for $\alpha$ with coefficients in $\mathbb k$, which gives us (i). Equation~\eqref{eqn:YB_2cocy} projected on $\mathfrak g\otimes \mathfrak g\otimes \mathfrak g$ was already considered above. The projections over $\mathbb k \otimes \mathfrak g\otimes \mathbb k$, and $\mathfrak g\otimes \mathfrak g \otimes \mathbb k$ are seen to be satisfied identically. The projection over $\mathfrak g\otimes \mathbb k \otimes \mathfrak g$ gives (vi), while the projection over $\mathbb k \otimes \mathfrak g\otimes \mathfrak g$ gives (vii). Finally, (v) follows from the projection on the summand $\mathbb k \otimes \mathbb k \otimes \mathfrak g$. 
\end{proof}

Let $\mathfrak g$ be a Lie algebra satisfying the hypotheses of Lemma~\ref{lem:2_cocy}. We define the groups $\mathcal Z(\mathfrak g)$, $\mathcal B(\mathfrak g)$ and $\mathcal H(\mathfrak g)$ as follows. The group $\mathcal Z(\mathfrak g)$ is defined as the group of triples $(g,\zeta, \xi)$ with $g: \mathfrak g\longrightarrow \mathfrak g$, $\zeta: \mathfrak g\otimes \mathfrak g\longrightarrow \mathfrak g$, and $\xi: \mathfrak g\otimes \mathfrak g\longrightarrow \mathfrak g\otimes \mathfrak g$ where  $g$ is a derivation, $\xi$ satisfies (vi), and the compatibility conditions (v) and (vii) between $g$, $\zeta$ and $\xi$ are satisfied. The group $\mathcal B(\mathfrak g)$ is defined as the subgroup of $\mathcal Z(\mathfrak g)$ where $g(x) = [w,x]$ (inner derivation), $\zeta(x\otimes y) = [h(x),y] + [x,h(y)] - h([x,y]) - s[x,y]$, and $\xi(x\otimes y) = -w\otimes [x,y]$, 
for some $s\in \mathbb k$, $w\in \mathfrak g$, and $h:\mathfrak g\longrightarrow \mathfrak g$. Finally, we set $\mathcal H(\mathfrak g) := \mathcal Z(\mathfrak g)/ \mathcal B(\mathfrak g)$. 

We give now a characterization of the second cohomology group of YB operators $R_{\frak g}$ arising from Lie algebras that are perfect, and with zero center (e.g. they are semisimple). 

\begin{theorem}\label{thm:second_coh}
		Let $\frak g$ be a perfect Lie algebra with zero center. Let us denote by $R$ the YB operator associated to it. Then, the second cohomology group of $R$ is given by $H^2_{\rm YB}(R) = H^2_{\rm Lie}(\mathfrak g, \mathbb k) \oplus \mathcal H(\mathfrak g)$.  
\end{theorem}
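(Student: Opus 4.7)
The plan is to piggyback on Lemma~\ref{lem:2_cocy} to first identify $Z^2_{\rm YB}(R)$ with $Z^2_{\rm Lie}(\mathfrak g,\mathbb k) \oplus \mathcal Z(\mathfrak g)$, then to compute $\delta^1_{\rm YB}(f)$ explicitly so as to match its image with $B^2_{\rm Lie}(\mathfrak g,\mathbb k) \oplus \mathcal B(\mathfrak g)$, after which the theorem follows by passing to quotients.

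For the first step, Lemma~\ref{lem:2_cocy} shows that a YB $2$-cocycle $\phi$ is completely determined by the quadruple $(\alpha, g, \zeta, \xi)$, where $\alpha := (\pi_0\otimes \pi_0)\phi|_{\mathfrak g\otimes \mathfrak g}$, $g$ is the derivation appearing in (iv), and $\zeta := (\pi_0\otimes \pi_1)\phi|_{\mathfrak g\otimes \mathfrak g}$, $\xi := (\pi_1\otimes \pi_1)\phi|_{\mathfrak g\otimes \mathfrak g}$; condition (i) says $\alpha\in Z^2_{\rm Lie}(\mathfrak g,\mathbb k)$ and the conditions (v)--(vii) say $(g,\zeta,\xi)\in \mathcal Z(\mathfrak g)$. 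Conversely, any quadruple with these constraints defines a YB $2$-cocycle via
$$
\phi(x\otimes y) = \alpha(x\otimes y)(1,0)\otimes (1,0) + (1,0)\otimes (0,\zeta(x\otimes y)) + \xi(x\otimes y), \quad x,y\in \mathfrak g,
$$
together with the prescriptions of (ii) and (iv). This gives an isomorphism $Z^2_{\rm YB}(R) \cong Z^2_{\rm Lie}(\mathfrak g,\mathbb k) \oplus \mathcal Z(\mathfrak g)$.

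For the second step, I would parametrize a general $1$-cochain $f:X\to X$ by $f(1,0)=(s,w)$ and $f(0,x)=(\beta(x),h(x))$ with $s\in \mathbb k$, $w\in \mathfrak g$, $\beta:\mathfrak g\to \mathbb k$, $h:\mathfrak g\to \mathfrak g$. Evaluating the four constituent compositions of $\delta^1_{\rm YB}(f)$ on the tensors $(0,x)\otimes (0,y)$ and $(1,0)\otimes (0,y)$ using $R((a,x)\otimes (b,y))=(b,y)\otimes (a,x)+(1,0)\otimes (0,[x,y])$ and collecting terms, the resulting quadruple reads
$$
\alpha=-\delta^1_{\rm Lie}(\beta),\quad g(x)=[w,x],\quad \zeta(x\otimes y)=[h(x),y]+[x,h(y)]-h([x,y])-s[x,y],\quad \xi(x\otimes y)=-w\otimes [x,y].
$$
These are precisely the defining conditions for $B^2_{\rm Lie}(\mathfrak g,\mathbb k) \oplus \mathcal B(\mathfrak g)$, and since $\beta,s,w,h$ can be chosen independently, the isomorphism above restricts to $B^2_{\rm YB}(R) \cong B^2_{\rm Lie}(\mathfrak g,\mathbb k) \oplus \mathcal B(\mathfrak g)$; taking quotients gives the theorem.

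The main obstacle is not conceptual but bookkeeping. In the cocycle step, one must verify that the conditions (v)--(vii) split cleanly, with no coupling between the $\alpha$ piece and the $(g,\zeta,\xi)$ piece -- a fact that ultimately relies on $\mathfrak g$ being perfect and centerless, as used throughout Lemma~\ref{lem:2_cocy}. In the coboundary step, one must carefully evaluate all four terms of $\delta^1_{\rm YB}(f)$ on simple tensors of every relevant type and check that no hidden contributions to $\alpha$ or $\xi$ arise beyond the ones displayed; the most delicate point is tracking the constant $s$ and the vector $w$ across the three compositions that contain $R$, in order to confirm that their contribution to the $(\pi_1\otimes \pi_1)$-summand reduces exactly to $-w\otimes [x,y]$.
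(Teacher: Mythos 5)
Your proposal is correct and follows essentially the same route as the paper's proof: both invoke Lemma~\ref{lem:2_cocy} to identify the $2$-cocycles with $Z^2_{\rm Lie}(\mathfrak g,\mathbb k)\oplus \mathcal Z(\mathfrak g)$, then compute $\delta^1_{\rm YB}(f)$ for a $1$-cochain parametrized exactly as you do (the paper writes $f(1)=(s,w)$ and $f=(f_0,f_1)$ where you write $\beta,h$), recovering the same expressions $g(x)=[w,x]$, $\zeta(x\otimes y)=[h(x),y]+[x,h(y)]-h([x,y])-s[x,y]$, $\xi(x\otimes y)=-w\otimes[x,y]$, and the Lie coboundary $f_0([x,y])$ on the $\mathbb k\otimes\mathbb k\otimes\mathbb k$ summand, before passing to the quotient. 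The bookkeeping caveats you flag (the decoupling of the $\alpha$ piece from $(g,\zeta,\xi)$, and tracking $s$ and $w$ through the three compositions) are precisely the ``direct and rather tedious analysis'' the paper alludes to without spelling out.
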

\begin{proof}
	The most difficult part of the proof is to obtain a characterization for the $2$-cocycles. This has been done in Lemma~\ref{lem:2_cocy}. They constitute the set $\mathcal Z(\mathfrak g)$ defined above. 
	
	We need to derive the coboundaries.
	The coboundary of a $1$-cochain $f: X \longrightarrow X$ is given by:
	\begin{eqnarray*}
			\delta^1_{\rm YB}(f)(a,x)\otimes (b,y) &=& a\cdot (1,0) \otimes (0, [w,y]) + b\cdot (1,0) \otimes (0,[x,w]) + (1,0) \otimes (0,[f_1(x),y])\\
			&&+  (1,0) \otimes (0,[x,f_1(y)]) - s\cdot (1,0)\otimes (0,[x,y]) - (0,w)\otimes (0,[x,y])\\
			&& - (1,0) \otimes (f_0([x,y],0)) - (1,0)\otimes (0,f_1([x,y])),
	\end{eqnarray*}
	where we have used the decomposition of $f$ as $f(a,x) = af(1) + f(x)$, we have set $f(1) = (s, w)$ and written $f(x) = (f_0(x), f_1(x))$, with $f_0 : X \longrightarrow \mathbb k$ and $f_1: X\longrightarrow \frak g$. While not immediately obvious, one can directly verify that these coboundaries satisfy (i) - (vii) in Lemma~\ref{lem:2_cocy} as required. 
	
	To complete the proof, we need now to consider the components of Equation~\ref{eqn:YB_2cocy} projected on the simple tensorands modulo the projections of the coboundaries. A direct (and rather tedious) analysis shows that projecting on summands other than $\mathbb k\otimes \mathbb k \otimes \mathbb k$ we obtain $\mathcal B(\mathfrak g)$ defined above. Combining this with the $2$-cocycles from Lemma~\ref{lem:2_cocy}, gives rise to the direct summand $\mathcal H(\mathfrak g)$. 
	
	Projecting on $\mathbb k\otimes \mathbb k \otimes \mathbb k$ we find $f_0([x,y])$, which is the Lie coboundary with trivial coefficients in $\mathbb k$. Observe that the $2$-cocycle component on $\mathbb k\otimes \mathbb k \otimes \mathbb k$ is precisely the $2$-cocycle condition for Lie cohomology with trivial coefficients, by Lemma~\ref{lem:2_cocy}. This term is completely independent of the triples in $\mathcal H(\mathfrak g)$. Therefore, we get the remaining summand $H^2_{\rm Lie}(\mathfrak g,\mathbb k)$. This completes the proof. 
\end{proof}

It is well known that Lie cohomology is trivial for semisimple Lie algebras. However, as we will see in the examples below, the cohomology of YB operators associated to semisimple Lie algebras is not necessarily trivial. In fact, it turns out that $\mathcal H(\mathfrak g) \neq 0$ when $\mathfrak g = {\mathfrak sl}_2(\mathbb C)$. This  fact is somewhat suprising, considering that semisimple Lie algebras do not admit any nontrivial deformations, which is the second Whitehead Lemma.

\begin{remark}
	{\rm 
			A Lie algebra $\frak g$ endowed with a $2$-cocycle $\beta: \frak g\otimes \frak g \longrightarrow \mathbb k$ is also called a {\it quasi-Frobenius Lie algebra}. Moreover, if $\beta$ is cobounded, $\mathfrak g$ is said to be a {\it Frobenius Lie algebra}. See \cite{Pham}. Therefore, the direct summand $H^2_{\rm Lie}(\mathfrak g, \mathbb k)$ of $H^2_{\rm YB}(R)$ is the set of equivalence classes of quasi-Forbenius structures on $\mathfrak g$. 
	}
\end{remark}

\begin{remark}
	{\rm 
			There is an interesting class of perfect Lie algebras with trivial center, namely the {\it sympathetic} Lie algebras of Benayadi \cite{Ben}. They are additionally assumed to satisfy the condition that all derivations are inner. Recent work of Burde and Wagemann has shown that sympathetic Lie algebras might have nontrivial second cohomology \cite{BW}, which therefore gives rise to nontrivial YB second cohomology. By virtue of the characterization in Theorem~\ref{thm:second_coh}, it is possible to attempt a complete study of $H^2_{\rm YB}(R)$ where $R$ is the YB operator associated to the $25$ dimensional Benayadi's Lie algebra $\frak g_B$, whose second Lie cohomology group is nontrivial. In fact, $H^2(\frak g_B, \mathbb C) = 0$, and all derivations are inner. So, the problem of determining $H^2_{\rm YB}(R)$ is simpler. 
	}
\end{remark}

\section{Examples}\label{sec:ex}

We now consider some examples of the theory developed in this article. In particular, we show that there exist YB operators that admit infinitely many deformations (i.e. their deformations are integrable), and show that starting from semisimple Lie algebras, we can find YB operators that have nontrivial deformations. 

We start by constructing a YB operator that can be deformed infinitely many times and, therefore, admits perturbative expansions of any order.

	\begin{example}
	{\rm 
		Let $\frak H_m$ be the Heisenberg Lie algebra of dimension $2m +1$. From \cite{Sant}, we know that the Betti numbers of $\frak H_m$ are given by 
		$$
		{\rm dim}H^p(\frak H_m, \frak H_m) = {2m\choose p} - {2m\choose p-2},
		$$
		where $p\leq m$, which is not restrictive due to Poincar\'e duality. Therefore, the $5$-dimensional Heisenberg Lie algebra $\frak H_2$ has
		$$
		{\rm dim}H^2(\frak H_2, \frak H_2) = 5 
		$$
		and
		$$
		{\rm dim}H^3(\frak H_2, \frak H_2) = 0.
		$$
		This shows, applying Corollary~\ref{cor:perturbative_lambda}, that the corresponding YB operator $R_{\frak H_2}$ can be deformed arbitrarily many times, giving rise to a perturbative expansion
		$$
		\hat R = \sum_{i=0}^\infty \hbar^i R_i,
		$$
		where $R_0 = R_{\frak H_2}$ is the original YB operator, and $R_1$ is any choice of $2$-cocycle in any class of $H^2(\frak H_2, \frak H_2)$. 
		
		More generally, nilpotent Lie algebras are known to have highly nontrivial cohomologies (in particular second cohomology), which have been studied in some special cases quite in detail. The same procedure can be applied whenever the third cohomology vanishes. Alternatively, one can consider the obstruction in third cohomology and determine whether this vanishes even when $H^3(\mathfrak g, \mathfrak g)$ is nonzero on a case by case basis.
	}
\end{example}

Let us now consider the semisimple case. For such a Lie algebra, we know that the bracket cannot be deformed. However, as we will see, there are nontrivial YB deformations, that therefore do not arise from Lie algebra deformations.

\begin{example}\label{ex:sl2}
	{\rm 
				Let $\mathfrak g := {\mathfrak sl}_2(\mathbb C)$ be the special linear Lie algebra of dimension $3$ with complex coefficients. It is a well known fact that $\mathfrak g$ has trivial cohomology, since it is simple (Whiteheads lemmas). A natural question that arises is whether the YB second cohomology of the operator associated to $\mathfrak g$ is trivial as well. In such a case, the operator could not be deformed and no perturbative expansion would exist. However, it turns out that the rigidity of the Lie algebra structure does not implies the rigidity of the corresponding YB operator. In fact, a direct computation using the characterization of Theorem~\ref{thm:second_coh} gives $\dim H^2_{\rm YB}(R) = 2$.
				
				 Since, following Theorem~\ref{thm:Higher_obstruction}, nontrivial $\Lambda$-deformations are equivalent to Lie algebra deformations, it follows that the deformations of the YB operator $R$ must not be $\Lambda$-deformations, and they are therefore not arising from deformations of the Lie algebra structure.  
	}
\end{example}

\appendix

\end{document}